\setlist[itemize]{topsep=4pt,itemsep=3pt,parsep=0pt} 
\setlist[enumerate]{topsep=4pt,itemsep=3pt,parsep=0pt} 
\crefname{claim}{Claim}{Claims}
\crefname{figure}{Figure}{Figures}
\newtheorem{theorem}{Theorem}[section]
\newtheorem{observation}[theorem]{Observation}
\newtheorem{lemma}[theorem]{Lemma}
\newtheorem{claim}[theorem]{Claim}
\theoremstyle{definition}
\newtheorem{definition}[theorem]{Definition}
\theoremstyle{plain}
\theoremstyle{definition}
\newtheorem*{example*}{Example}
\numberwithin{equation}{section}
\newenvironment{claimproof}[1][Proof of the claim.]{%
  \begin{proof}[#1]%
}{%
  \end{proof}%
}
\newcommand{\Qq}{\mathcal{Q}}
\newcommand{\Xx}{\mathcal{X}}
\newcommand{\Yy}{\mathcal{Y}}
\newcommand{\N}{\mathbb{N}}
\newcommand{\Gaif}{\mathsf{Gaif}}
\newcommand{\Ball}{\mathsf{Ball}}
\newcommand{\dist}{\mathrm{dist}}
\newcommand{\Ff}{\mathcal{F}}
\newcommand{\Ss}{\mathcal{S}}
\newcommand{\aff}[1]{\textcolor{black!60}{\small{#1}}}
\newcommand{\Tf}{\mathsf{T}}
\newcommand{\Balls}{\mathsf{Balls}}
\newcommand{\Cc}{\mathscr C}
\newcommand{\Dd}{\mathscr D}
\newcommand{\Ee}{\mathscr E}
\def\epsilon{\varepsilon}
\renewcommand{\emptyset}{\varnothing}
\renewcommand{\leq}{\leqslant}
\renewcommand{\geq}{\geqslant}
\renewcommand{\setminus}{-}
\tikzset{node/.style={draw, circle, fill = black, minimum size = 3pt, inner sep=0pt, line width=1pt}}
\tikzset{nodesubwall/.style={draw, circle, fill = blue, minimum size = 3.1pt, inner sep=0pt}}
\tikzset{corner/.style={draw=magenta, fill = red!20!white, minimum size = 6pt,inner sep=0pt,line width=1pt}}
\tikzset{central/.style={draw=orange, fill = red!20!white, minimum size = 6pt,inner sep=0pt,line width=1pt}}
\tikzset{edge/.style={draw=white!60!black,line width=1.5pt}}
\tikzset{edgesubwall/.style={draw=blue!60!white,line width=2pt}}
\tikzset{subnode/.style={draw, circle, fill = yellow!50!red!50!white, minimum size = 2pt, inner sep=0pt}}
\tikzset{edgethin/.style={draw=white!60!black,line width=1.3pt}}
\begin{document}

\newcommand{\funding}{Mi. P. was supported by the project BOBR that is funded from the European Research Council (ERC) under the European Union’s Horizon 2020 research and innovation programme with grant agreement No. 948057.
}

\title{Strong odd colorings in graph classes of bounded expansion\footnote{\funding}}
\date{}
 \author{
   Michał Pilipczuk \\
   \aff{University of Warsaw} \\
   \aff{michal.pilipczuk@mimuw.edu.pl}
 }
\maketitle

\begin{abstract}
	We prove that for every $d\in \N$ and a graph class of bounded expansion $\Cc$, there exists some $c\in \N$ so that every graph from $\Cc$ admits a proper coloring with at most $c$ colors satisfying the following condition: in every ball of radius~$d$, every color appears either zero times or an odd number of times. For $d=1$, this provides a positive answer to a question raised by Goetze, Klute, Knauer, Parada, Pe\~na, and Ueckerdt [ArXiv~2505.02736] about the boundedness of the strong odd chromatic number in graph classes of bounded expansion. The key technical ingredient towards the result is a proof that the strong odd coloring number of a sets system can be bounded in terms of its semi-ladder index, 2VC dimension, and the maximum subchromatic number among  induced subsystems.
\end{abstract}

 \begin{textblock}{20}(-1.75, 6.1)
 \includegraphics[width=40px]{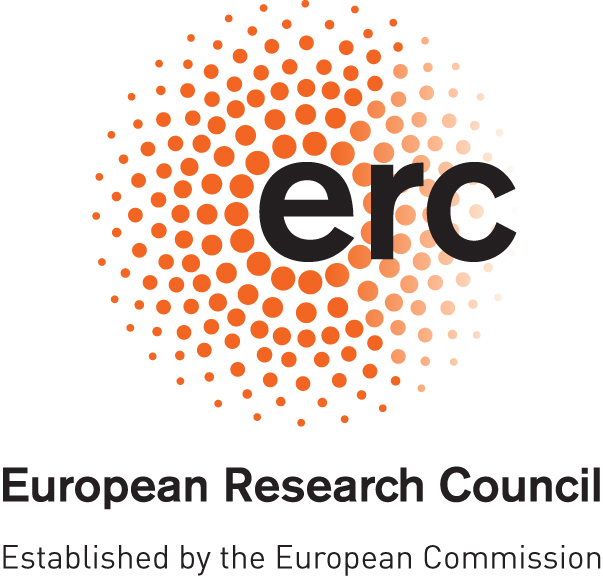}%
 \end{textblock}
 \begin{textblock}{20}(-1.75, 7.1)
 \includegraphics[width=40px]{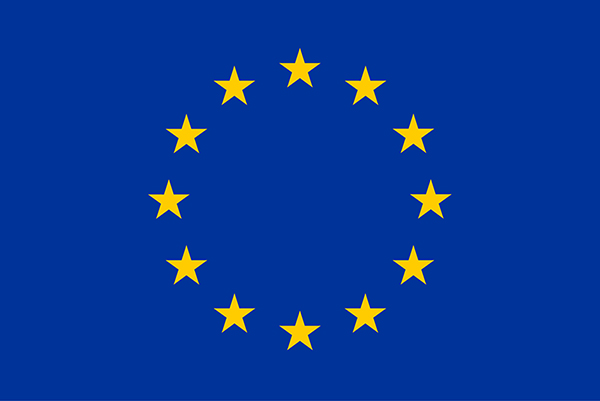}%
 \end{textblock}

\newpage

\clearpage
\setcounter{page}{1}

\section{Introduction}\label{sec:intro}

Recall that a {\em{proper coloring}} of a graph $G$ is a function $\phi$ mapping vertices of $G$ to some palette of colors, say $C$, so that for every two vertices $u,v$ that are adjacent in $G$, we have $\phi(u)\neq \phi(v)$. Recently, Kwon and Park~\cite{KwonP24} introduced the concept of a {\em{strong odd coloring}} of a graph, which is a proper coloring that satisfies the following additional condition: for every vertex $u$ and every color $i\in C$, the number of vertices of color $i$ among the neighbors of $u$ is either zero or odd. This is a strengthening of the notion of an {\em{odd coloring}}, where we only require that the coloring is proper and for every vertex $u$ with a nonempty neighborhood, there exists a color that is featured an odd number of times among the neighbors of $u$. The {\em{(strong) odd chromatic number}} of a graph $G$ is defined as the minimum number of colors needed for a (strong) odd coloring of $G$.

Kwon and Park~\cite{KwonP24} proved several results relating the strong odd chromatic number of a graph and its maximum degree. They also asked whether the strong odd chromatic number is universally bounded on the class of planar graphs. This question was answered in positive by Caro et al.~\cite{CaroPST24}, who proved that the strong odd chromatic number of every planar graph is at most $388$. Very recently, Goetze et al.~\cite{GoetzeKKPPU25} generalized this result by proving that every proper minor-closed class has bounded strong odd chromatic number. They also asked whether this conclusion is true even in the larger generality of graph classes of bounded expansion, which is a concept of uniform, local sparsity in graphs that underlies the theory of Sparsity of Ne\v{s}et\v{r}il and Ossona de Mendez; see \cite{sparsity} for an introduction. Note that, as proved by Hickingbotham~\cite{Hickingbotham23} and by Liu~\cite{Liu24}, graph classes of bounded expansion have bounded odd chromatic~number.

\medskip

In this paper we answer the question of Goetze et al.~\cite{GoetzeKKPPU25} in positive.

\begin{theorem}\label{thm:main-be}
	Every graph class of bounded expansion has bounded strong odd chromatic number.
\end{theorem}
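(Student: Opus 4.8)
The plan is to isolate a purely combinatorial statement about set systems (the paper's announced technical core) and to connect it to bounded expansion via the neighbourhood hypergraph. I will in fact aim for the stronger statement from the abstract: for every bounded expansion class $\Cc$ and every $d\in\N$ there is $c$ such that every $G\in\Cc$ has a proper colouring with at most $c$ colours in which, inside every ball of radius $d$, each colour appears zero times or an odd number of times. \Cref{thm:main-be} is the case $d=1$, since a proper colouring that is odd on every closed ball $\Ball^G_1(v)=N_G[v]$ is automatically odd on every open neighbourhood $N_G(v)$, i.e.\ a strong odd colouring.

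To each $G\in\Cc$ I associate the \emph{$d$-ball set system} $\Hh_d(G)=(V(G),\setof{\Ball^G_d(v)}{v\in V(G)})$. As bounded expansion implies bounded degeneracy, $G$ has a proper colouring $\phi_0\from V(G)\to[c_0]$ with $c_0=c_0(\Cc)$ colours; write $S_i=\phi_0^{-1}(i)$, an independent set, so that each $\Hh_d(G)[S_i]=(S_i,\setof{\Ball^G_d(v)\cap S_i}{v})$ is an induced subsystem of $\Hh_d(G)$. I then claim, uniformly over $G\in\Cc$:
\begin{enumerate}
\item the semi-ladder index of $\Hh_d(G)$ is bounded in terms of $\Cc,d$ — a long semi-ladder here is precisely a long distance-$d$ semi-ladder in $G$, and such configurations are short in any nowhere dense (hence bounded expansion) class, since these classes are stable; a direct argument via a weak-coloring-number ordering of $G$ is also available;
\item the 2VC dimension of $\Hh_d(G)$ is bounded in terms of $\Cc,d$, by the standard bound on distance-$d$ VC density / neighbourhood complexity of bounded expansion classes;
\item the subchromatic number of $\Hh_d(G)$, and of each of its induced subsystems, is bounded in terms of $\Cc,d$.
\end{enumerate}
Given (1)--(3), the paper's set-system theorem applied to each $\Hh_d(G)[S_i]$ yields $\psi_i\from S_i\to[k]$ with $k=k(\Cc,d)$ such that $|\Ball^G_d(v)\cap S_i\cap\psi_i^{-1}(j)|\in\{0\}\cup(2\Z+1)$ for all $v,j$. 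Using disjoint palettes, $\psi(v):=(\phi_0(v),\psi_{\phi_0(v)}(v))$ is proper (equal $\phi_0$-colour forces a common $S_i$, hence non-adjacency) and satisfies $|\Ball^G_d(v)\cap\psi^{-1}(i,j)|=|\Ball^G_d(v)\cap S_i\cap\psi_i^{-1}(j)|\in\{0\}\cup(2\Z+1)$, using $c_0k$ colours — the desired bound.

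I expect the main obstacles to be, first, the abstract's set-system theorem itself (bounding the strong odd coloring number of a set system by its semi-ladder index, 2VC dimension, and the maximum subchromatic number over induced subsystems), which is the genuinely new combinatorial input and which I would prove separately by a stability-flavoured argument that controls parities cluster by cluster; and second, on the sparsity side, item (3). By the definition of the subchromatic number of a set system, (3) reduces to a statement about the Gaifman graph $\Gaif(\Hh_d(G))$ — and since two vertices lie in a common ball of radius $d$ exactly when they are at distance at most $2d$ in $G$, this graph is precisely the power $G^{2d}$ — namely that $G^{2d}$ has bounded subchromatic number, i.e.\ $V(G)$ partitions into boundedly many parts each splitting into pieces of $G$-diameter at most $2d$ that are pairwise at $G$-distance more than $2d$; monotonicity of the subchromatic number under induced subgraphs then also handles the induced subsystems. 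To produce such a clustered distance colouring I would invoke the structural decomposition theory of bounded expansion classes — either low treedepth colourings (colour $G$ so that any boundedly many colour classes induce a graph of treedepth at most $p$, whose connected components then have diameter below $2^p$) or, more hands-on, a weak-coloring-number ordering with $\wcol_{2d}(G)$ bounded, exploiting that any two vertices at distance at most $2d$ share one of the boundedly many vertices weakly $2d$-reachable from a given vertex, and using these ``meeting points'' to organise $V(G)$ into a bounded number of clustered parts. Items (1) and (2) are by now standard facts about sparse classes, for which I would include only short proofs or pointers, and the assembly above is routine once the set-system theorem is in hand.
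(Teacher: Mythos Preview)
Your overall architecture matches the paper's exactly: you reduce to the radius-$d$ ball set system, quote the paper's set-system theorem as a black box, verify boundedness of the semi-ladder index and 2VC dimension by citing known results, and assemble the final colouring via a product with a proper colouring of $G$. Items (1), (2), and the assembly are fine and essentially identical to what the paper does.

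The genuine gap is in item (3). You assert that bounding the subchromatic number of $\Gaif(\Hh_d(G))=G^{2d}$ suffices, because ``monotonicity of the subchromatic number under induced subgraphs then also handles the induced subsystems.'' But the Gaifman graph of an induced subsystem $\Hh_d(G)[W,\Qq]$ is in general \emph{not} an induced subgraph of $G^{2d}$: restricting the family $\Qq\subsetneq\Ff$ can delete edges of $\Gaif(\Ss)[W]$, yielding only a spanning subgraph of $G^{2d}[W]$. And the subchromatic number is \emph{not} monotone under taking (non-induced) subgraphs --- removing edges from a cluster graph can destroy the cluster structure (e.g.\ turning $K_3$ into $P_3$), so a subcolouring of $G^{2d}$ need not restrict to one of $\Gaif(\Hh_d(G)[W,\Qq])$. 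The set-system theorem, as stated, really does require the bound for \emph{all} induced subsystems: in the induction one repeatedly removes the current universe from the family before recursing, and this is exactly a restriction of $\Qq$.

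This is not a minor patch. The paper singles out precisely this point as the obstacle (``the last proof applies only to the whole set system of balls, and not to its induced subsystems''), explicitly says it could not derive the hereditary version from the Ne\v{s}et\v{r}il--Ossona de Mendez--Pilipczuk--Zhu subchromatic bound for $G^{2d}$, and devotes all of Section~3 to a new argument: one shows the class of these Gaifman graphs is a first-order transduction of $\Cc$ (hence has structurally bounded expansion) and excludes a fixed $K_{t,t}$, and then proves separately that any $K_{t,t}$-free class of structurally bounded expansion has bounded subchromatic number, via low shrubdepth colourings and an inductive bound for cographs of bounded depth. Your proposed routes through low treedepth colourings or $\wcol_{2d}$ would recover the bound for $G^{2d}$ itself, but you would still need an argument of this kind to cover the induced subsystems with restricted~$\Qq$.
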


In fact, we prove a more general statement that speaks about balls of any fixed radius $d$, instead of just neighborhoods.

\begin{restatable}{theorem}{main}\label{thm:main}
	Let $d\in \N$ and $\Cc$ be a graph class of bounded expansion. Then there exists $c\in \N$ such that for every graph $G\in \Cc$ there exists a proper coloring of $G$ with $c$ colors satisfying the following condition: for every vertex $u$ of $G$ and color $i$, the number of vertices of color $i$ that are at distance at most $d$ from $u$ is either zero or odd.
\end{restatable}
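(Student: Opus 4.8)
The plan is to derive \Cref{thm:main} from the set‑system statement advertised in the abstract, namely that the strong odd coloring number of a set system is bounded by a function of its semi‑ladder index, its 2VC dimension, and the supremum of the subchromatic numbers of its induced subsystems. Thus the work splits into two parts: (i) re‑encoding the ball‑coloring problem as a set‑system coloring problem, and (ii) verifying that the set systems arising from graphs in a fixed bounded expansion class $\Cc$ have all three of these parameters bounded by a constant depending only on $\Cc$ and $d$.

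For (i), fix $G\in\Cc$ and let $\Ff_G$ be the set system on ground set $V(G)$ consisting of all balls $\Ball_d^G(u)=\setof{v}{\dist_G(u,v)\le d}$ for $u\in V(G)$, together with all two‑element sets $\set{u,v}$ with $uv\in E(G)$. A coloring of $V(G)$ in which, inside every member of $\Ff_G$, each color appears either zero times or an odd number of times is precisely a proper coloring of $G$ with the property demanded by \Cref{thm:main}: the ball members encode the parity condition on $d$‑balls, while the two‑element members encode properness (a color occurring an odd number of times in a two‑element set occurs exactly once). So it suffices to bound the strong odd coloring number of $\Ff_G$ uniformly over $G\in\Cc$, and by the set‑system theorem it is enough to bound its three parameters.

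For the semi‑ladder index: a bounded expansion class is in particular nowhere dense, hence stable, and the distance‑at‑most‑$d$ relation of graphs from $\Cc$ is stable, which is exactly the statement that the hypergraph of radius‑$d$ balls has bounded semi‑ladder index; alternatively this can be derived from the boundedness of the generalized coloring numbers $\wcol_{2d}$. For the 2VC dimension: the radius‑$d$ ball hypergraph of a graph from a nowhere dense class has bounded VC dimension, and the 2VC dimension — a VC‑type parameter of the symmetric‑difference (and more generally Boolean) closure of the system, which is the natural quantity governing parity conditions — is controlled by this through the standard fact that Boolean combinations of a bounded‑VC family again have bounded VC dimension, at the cost of at most a logarithmic factor. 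Adjoining the two‑element sets encoding $E(G)$ changes none of these quantities by more than a constant, since a set system all of whose members have at most two elements has all three parameters bounded by absolute constants, and each parameter behaves well under the union of two set systems (pigeonhole for the ladder‑type parameters; a product coloring for the subchromatic number).

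The remaining point — bounding the subchromatic number of every induced subsystem of $\Ff_G$ uniformly over $G\in\Cc$ — is the step that uses bounded expansion in an essential way rather than merely nowhere denseness or stability, and I expect it to be the main obstacle. The plan here is to exploit low‑treedepth (equivalently, $p$‑centered) colorings: color $V(G)$ with a bounded number of colors so that any $p$ color classes induce a subgraph of treedepth at most $p$, with $p$ chosen large enough as a function of $d$ to absorb the passage to an induced subgraph and the resulting distortion of distances. On each of the resulting bounded‑treedepth induced subgraphs the required subcoloring is built by induction on the treedepth, using that the elimination forest exposes at its top a bounded set of vertices through which all sufficiently long paths must pass, so that these vertices can be removed and colored separately while the remainder breaks into pieces of smaller treedepth handled recursively. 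Pinning down the precise definition of the subchromatic number of a set system for which the set‑system theorem holds, and checking that induced traces of $G$‑balls are handled correctly after restriction to a subgraph, is where the genuine technical content lies, and is the step I would expect to consume most of the effort.
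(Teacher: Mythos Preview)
Your architecture matches the paper's --- reduce to the set-system theorem and bound the three parameters --- but two of your steps have problems.

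First, a definitional confusion: the 2VC dimension has nothing to do with symmetric differences or Boolean closure, and it is \emph{not} controlled by the VC dimension (the family of all $2$-subsets of $[n]$ has VC dimension $2$ and 2VC dimension $n$). The correct argument, due to Pilipczuk and Siebertz and quoted in the paper as \cref{lem:2VC-bounded}, is that a $2$-shattered set of size $t$ in $\Balls_d(G)$ forces $K_t$ as a depth-$d$ minor of $G$; nowhere denseness then gives the bound directly. This is easily fixed, but your stated justification is wrong.

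Second, and this is the real gap: your plan for the subchromatic number of induced subsystems does not work as stated. Low-treedepth colorings control induced subgraphs of $G$, but the Gaifman graph of an induced subsystem $\Ss[W,\Qq]$ of $\Balls_d(G)$ is not a subgraph of $G$; its edges record co-membership in $G$-balls (with centers possibly outside $W$), so it is a subgraph of $G^{2d}$ restricted to $W$, and bounded treedepth of pieces of $G$ says nothing direct about it. The paper explicitly remarks that the authors were unable to derive this hereditary statement from the known non-hereditary result of Ne\v{s}et\v{r}il, Ossona de Mendez, Pilipczuk, and Zhu, and instead give a genuinely new proof: they show that the class of such Gaifman graphs is first-order transducible from $\Cc$ (hence has \emph{structurally} bounded expansion), argue separately via a rook-graph obstruction that it is $K_{t,t}$-free, and then prove that every $K_{t,t}$-free class of structurally bounded expansion has bounded subchromatic number using low \emph{shrubdepth} colorings and an induction over bounded-depth cographs. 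This is where the substance of the derivation lies, and your sketch does not supply it. (Incidentally, the paper's reduction to the set-system theorem is also slightly cleaner than yours: rather than adjoining edge pairs to $\Ff_G$, it first properly colors $G$ with a bounded number of colors and then applies the set-system theorem to $\Balls_d(G)$ restricted to each color class, overlaying the results; this avoids reasoning about unions of set systems altogether.)
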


Note that \cref{thm:main-be} follows from \cref{thm:main} for $d=1$. Indeed, if $\phi$ is a proper coloring of a graph $G$ such that in every radius-$1$ ball, say around a vertex $u$, every color is featured zero times or an odd number, then this is also true for the (open) neighborhood $N_G(u)$, because the color of $u$ is not featured among the colors present in $N_G(u)$ (due to $\phi$ being proper).

The core element of our proof is the following statement about strong odd colorings of set systems; that is, colorings of the universe of the set system so that every set in the family contains every color zero times or an odd number of times. Precisely, we bound the odd chromatic number of a set system in terms of three parameters: the semi-ladder index, the 2VC dimension, and the maximum subchromatic number among the induced subsystems (see \cref{sec:prelims,sec:subchromatic} for definitions).

\begin{restatable}{theorem}{setSystem}\label{thm:main-set-system}
	There is a function $f\colon \N^3\to \N$ such that the following holds. Suppose $\Ss$ is a set system with semi-ladder index at most $\ell$, 2VC dimension at most $h$, and whose every induced subsystem has subchromatic number at most $s$, for some $\ell,h,s\in \N$. Then the strong odd chromatic number of $\Ss$ is at most $f(\ell,h,s)$.
\end{restatable}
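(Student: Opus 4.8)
The plan is to bound the strong odd chromatic number of $\Ss$ by combining the three hypotheses as follows: the semi-ladder index $\ell$ together with the 2VC dimension $h$ is used to split the universe into a bounded number of pieces on which $\Ss$ is structurally tame, the hereditary subchromatic bound $s$ is used to strong-odd-color each piece with few colors, and pairwise disjoint palettes are used to glue these colorings together without destroying the parity condition.

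First I would establish a structural decomposition from the bounds on $\ell$ and $h$: $U$ can be partitioned into parts $U_1,\dots,U_t$, with $t$ bounded in terms of $\ell$ and $h$, such that for each $j$ the subsystem induced on $U_j$ is ``clustered'' — its nonempty traces can be organized so that every set of $\Ss$ meets $U_j$ in (a bounded modification of) a single block of a fixed partition of $U_j$. Given this, the subchromatic bound takes over: the subsystem induced on $U_j$ has subchromatic number at most $s$, so $U_j$ refines into $s$ subparts whose distinct nonempty traces are pairwise disjoint, and on each subpart one can give every trace-block its own pair of colors, so that within that subpart every color occurs in every set zero times or an odd number of times. Using mutually disjoint palettes for the $s$ subparts of a fixed $U_j$, and a fresh palette for each $j$, assembles into a strong odd coloring of $\Ss$ with $f(\ell,h,s)$ colors; the bounded-size ``errors'' stripped off in the clustering step form a subsystem of small sets, which can be handled with a further $O_h(1)$ colors by a direct argument — e.g. a proper coloring of its Gaifman graph already suffices (rainbow sets trivially satisfy the parity condition), and one must check that this graph has bounded chromatic number, which relies on the $\ell,h$ control rather than holding for arbitrary small-set systems.

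I expect the main obstacle to be producing the decomposition in the first step — this is the structural lemma alluded to in the introduction. Its rough shape should be: bounded semi-ladder index yields a linear order on $U$ making membership in the sets ``almost monotone'', and bounded 2VC dimension then lets one cut along this order into boundedly many intervals on which the residual structure collapses to a clustered one; quantifying ``almost'' and bounding the number of intervals is where the parameters genuinely enter. The second delicate point is the parity bookkeeping in the gluing: when a set $S$ is spread over several parts, the number of occurrences of a color in $S$ is a sum over the parts, and a sum of odd numbers need not be zero or odd. This is exactly what the disjointness of palettes resolves — it forces each color's multiplicity inside any set to be realized within a single part, where it has already been certified to be zero or odd — and it is also why it is essential that the number of parts be bounded in terms of $\ell$ and $h$ rather than merely finite, since the total palette is a function of $\ell$ and $h$ times the per-part budget governed by $s$.
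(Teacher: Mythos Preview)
Your proposal has a genuine gap at the coloring step for a single ``clustered'' piece. Once you refine a part $U_j$ using the subchromatic bound, what you actually get is that each color class is a disjoint union of cliques in the Gaifman graph, so every set $F$ meets a color class inside a \emph{single} cluster --- but different sets can have \emph{different, overlapping} traces inside the same cluster. The traces are not pairwise disjoint or equal; they are arbitrary subsets of a common clique. Hence ``give every trace-block its own pair of colors'' either means a pair per cluster (and then the parity condition fails, because the traces inside a cluster are still arbitrary) or a pair per distinct trace (and then the palette is unbounded). Either way you have not reduced the problem: strong-odd-coloring the subsystem induced on one cluster is essentially the original problem again. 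The speculative structural lemma (a linear order from bounded semi-ladder index, then cutting into intervals via 2VC) does not obviously help here and is not how any of the three hypotheses is used in the paper.

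The missing idea is \emph{induction on the semi-ladder index}. The paper first takes the subcoloring (from $s$) to get clusters; on each cluster the Gaifman graph is complete, so by the Ding--Seymour--Winkler theorem (this is where $h$ enters) the cluster can be covered by $O_{h}(1)$ sets from $\Ff$, hence partitioned into $O_{h}(1)$ pieces each contained in some $F\in\Ff$. The key observation is that if a piece $Y$ satisfies $Y\subseteq F$ with $F\neq U$, then any semi-ladder in $\Ss[Y,\Ff]$ can be extended by the pair $(u,F)$ for any $u\notin F$, so the semi-ladder index of $\Ss[Y,\Ff]$ is \emph{strictly smaller} than $\ell$. One then recurses on each piece. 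The gluing also needs more than disjoint palettes: the paper strengthens the inductive statement to a \emph{totally} strong odd coloring (every color appears zero or an odd number of times in the whole universe), groups clusters by which colors they use, and splits each group into subfamilies of odd cardinality, so that the global parity condition is met as well. Your disjoint-palette intuition is correct for why counts do not interfere across parts, but the recursion and the parity bookkeeping on the set of clusters are what make the argument close.
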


The proof of \cref{thm:main-set-system}, presented in \cref{sec:proof}, proceeds by induction on the semi-ladder index. The boundedness of the 2VC dimension is exploited through a duality result of Ding et al.~\cite{DingSW94}, whereas the boundedness of the subchromatic index is used to design the induction so that the semi-ladder index drops with every induction step.

To derive \cref{thm:main} from \cref{thm:main-set-system}, it suffices to show that for every $d\in \N$ and graph class of bounded expansion $\Cc$, the class of set systems of radius-$d$ balls in graphs from $\Cc$ has all the relevant parameters bounded. This was explicitly proved for the semi-ladder index by Fabia\'nski et al.~\cite{FabianskiPST19}, for the 2VC dimension by Pilipczuk and Siebertz~\cite{PilipczukS21}, and for the subchromatic number by Ne\v{s}et\v{r}il et al.~\cite{NesetrilMPZ20}. However, the last proof applies only to the whole set system of balls, and not to its induced subsystems. Hence, in \cref{sec:subchromatic} we propose a new argument that suitably strengthens the result of~\cite{NesetrilMPZ20}.

\section{Preliminaries}\label{sec:prelims}

By $\N$ we denote the set of nonnegative integers. For a positive integer $p$, we write $[p]\coloneqq \{1,\ldots,p\}$.

\paragraph*{Graphs.} We use standard graph notation. All graphs in this paper are finite, undirected, and simple, that is, without loops or parallel edges. For a graph $G$, by $V(G)$ we denote the vertex set of $G$. For $A\subseteq V(G)$, $G[A]$ denotes the subgraph induced by $A$, which is the graph on vertex set $A$ where any two vertices are adjacent if and only if they are adjacent in $G$.

A {\em{graph parameter}} is a mapping that assigns each graph a nonnegative integer.
A {\em{graph class}} is simply a set of graphs, typically infinite. For a graph parameter $\pi$ and a graph class $\Cc$, we say that $\Cc$ has {\em{bounded $\pi$}} if there is a constant $c\in \N$ such that $\pi(G)\leq c$ for all $G\in \Cc$.

A {\em{coloring}} of a graph is just a mapping $\phi\colon V(G)\to C$ where $C$ is a finite set of colors. In the context of colorings, we will often call the set $C$ the {\em{palette}}. We will often assume $C=[k]$ where $k\in \N$ is the number of colors used by $\phi$, but it  will sometimes be convenient to use other palettes. Proper colorings and strong odd colorings of graphs have already been defined in \cref{sec:intro}. Recall that the {\em{chromatic number}} of a graph $G$ is the least number of colors needed for a proper coloring of $G$.

For $d\in \N$, we say that a graph $H$ is a {\em{depth-$d$ minor}} of a graph $G$ if one can map every vertex $u$ of $H$ to a connected subgraph $\eta(u)$ of $G$ of radius at most $d$ so that the subgraphs $\{\eta(u)\colon u\in V(H)\}$ are pairwise vertex-disjoint, and for every edge $uv$ of $H$ there is an edge in $G$ with one endpoint in $\eta(u)$ and the other in $\eta(v)$. We have the following parameters:
\begin{itemize}
	\item $\nabla_d(G)$ is the largest average degree among depth-$d$ minors of $G$; and
	\item $\omega_d(G)$ is the largest $t$ such that $K_t$ --- the complete graph on $t$ vertices --- is a depth-$d$ minor of $G$.
\end{itemize}
We say that a class of graphs $\Cc$ has {\em{bounded expansion}} if $\Cc$ has bounded $\nabla_d$, for each $d\in \N$. Further, we say that $\Cc$ is {\em{nowhere dense}} if $\Cc$ has bounded $\omega_d$, for each $d\in \N$. Clearly, every graph class of bounded expansion is also nowhere dense. It is well-known that every proper minor-closed class has bounded expansion, and every class of bounded expansion has bounded chromatic number. A more thorough introduction to the theory of Sparsity, centered around the notions of nowhere denseness and bounded expansion, can be found in~\cite{sparsity,sparsityNotes}.

\paragraph*{Set systems and their parameters.} A {\em{set system}} is a pair $\Ss=(U,\Ff)$ consisting of a set $U$, called the {\em{universe}}, and a family $\Ff$ of subsets of $U$. The {\em{Gaifman graph}} of $\Ss$, denoted $\Gaif(\Ss)$, is the graph on the vertex set $U$ where elements $x,y\in U$ are adjacent if and only if there exists $F\in \Ff$ such that $x,y\in F$. In other words, every set from $\Ff$ induces a clique in $\Gaif(\Ss)$.

Let $\Ss=(U,\Ff)$ be a set system.
For a subset of the universe $W\subseteq U$ and a subfamily $\Qq\subseteq \Ff$, we define the {\em{induced subsystem}} $\Ss[W,\Qq]$ as follows:
\[\Ss[W,\Qq]\coloneqq (W,\{F\cap W\colon F\in \Qq\}).\]
In other words, the universe of $\Ss[W,\Qq]$ is $W$, and we keep only the sets belonging $\Qq$, each trimmed to~$W$.

Similarly as for graphs, a {\em{class of set systems}} is simply a set of set systems. Parameters of set systems and their boundedness on classes are defined analogously to graphs. We call a class of set systems $\Cc$ {\em{hereditary}} if for every $\Ss=(U,\Ff)\in \Cc$, $W\subseteq U$, and $\Qq\subseteq \Ff$, we also have $\Ss[W,\Qq]\in \Cc$. The {\em{hereditary closure}} of a class $\Cc$ consists of all set systems $\Ss[W,\Qq]$ for $\Ss=(U,\Ff)\in \Cc$, $W\subseteq U$, and $\Qq\subseteq \Ff$.

A {\em{coloring}} of a set system $\Ss=(U,\Ff)$ is just a mapping $\phi\colon U\to C$ for some palette of colors $C$. We say that $\phi$ is {\em{strong odd}} if for each $F\in \Ff$ and every color $i\in C$, the set $F\cap \phi^{-1}(i)$ is either empty or of odd cardinality.

We will exploit several parameters of set systems. In this section we discuss four of them: the semi-ladder index, the comatching index, the ladder index, and the 2VC dimension. All of them are defined as the maximum size of a certain obstruction that can be found in a set system.

\begin{definition}
	Let $\Ss=(U,\Ff)$ be a set system, and consider a sequence of elements $u_1,u_2,\ldots,u_\ell\in U$ and a sequence of sets $F_1,F_2,\ldots,F_\ell\in \Ff$. We call this pair of sequences (see \cref{fig:obstructions}):
	\begin{itemize}
		\item a {\em{semi-ladder}} if we have that $u_i\notin F_i$ for each $i\in [\ell]$, and $u_i\in F_j$ for all $i,j\in [\ell]$ with $i<j$;
		\item a {\em{comatching}} if we have that $u_i\notin F_i$ for each $i\in [\ell]$, and $u_i\in F_j$ for all $i,j\in [\ell]$ with $i\neq j$; and
		\item a {\em{ladder}} if we have that $u_i\notin F_j$ for all $i,j\in [\ell]$ with $i\geq j$, and $u_i\in F_j$ for all $i,j\in [\ell]$ with $i<j$.
	\end{itemize}
	The {\em{order}} of a semi-ladder is the length $\ell$ of the sequences forming a semi-ladder, and
	the {\em{semi-ladder index}} of $\Ss$ is the largest order of a semi-ladder present in $\Ss$. Same for comatchings and ladders.
\end{definition}

Note that every comatching is a semi-ladder and also every ladder is a semi-ladder. 
As observed in~\cite{FabianskiPST19}, Ramsey's Theorem implies that every semi-ladder of a large order contains a comatching of a large order or a ladder of a large order. In particular, we have the following: a class of set systems has bounded semi-ladder index if and only if it has bounded comatching index and bounded ladder index.

We proceed to the next parameter.

\begin{definition}
	Let $\Ss=(U,\Ff)$ be a set system. We say that a set $X\subseteq U$ is {\em{$2$-shattered}} in $\Ss$ if for every pair of distinct elements $x,y\in X$ there exists $F\in \Ff$ such that $F\cap X=\{x,y\}$. The {\em{2VC dimension}} of $\Ss$ is the largest cardinality of a subset of $U$ that is $2$-shattered in $\Ss$.
\end{definition}

The key fact about set systems of bounded 2VC dimension that we will exploit is the result of Ding, Seymour, and Winkler~\cite{DingSW94} about the connection between the packing number and the hitting number in set systems whose duals have bounded 2VC dimension. The following statement is the reformulation of this result through duality of set systems.

\begin{theorem}[{\cite[(1.1)]{DingSW94}}, reformulated through duality]\label{thm:ding}
	There exists a function $f\colon \N\times \N\to \N$ such that the following holds. Suppose $\Ss=(U,\Ff)$ is a set system of 2VC dimension at most $h$ such that $\bigcup \Ff=U$ and $\Gaif(\Ss)$ does not contain any independent set of size larger than $k$. Then there exists a subfamily $\Qq\subseteq \Ff$ such that $\bigcup \Qq=U$ and $|\Qq|\leq f(h,k)$.
\end{theorem}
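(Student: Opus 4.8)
The plan is to read the statement off from \cite[(1.1)]{DingSW94} via the standard duality of set systems, so the real content is to set up that duality and check that the three hypotheses and the conclusion correspond under it. Recall that the \emph{dual} of a set system $\Ss=(U,\Ff)$ is $\Ss^{\ast}\coloneqq\bigl(\Ff,\{\widehat u\colon u\in U\}\bigr)$, where $\widehat u\coloneqq\{F\in\Ff\colon u\in F\}$, and that a \emph{transversal} of a set system $\mathcal{T}=(V,\mathcal{E})$ is a subset of $V$ meeting every member of $\mathcal{E}$. In this language, \cite[(1.1)]{DingSW94} asserts the existence of a function $g\colon\N\times\N\to\N$ such that whenever $\mathcal{T}$ is a set system whose \emph{dual} has 2VC dimension at most $h$ and whose packing number (the maximum number of pairwise disjoint members) is at most $k$, then $\mathcal{T}$ has a transversal of size at most $g(h,k)$. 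I would apply this to $\mathcal{T}\coloneqq\Ss^{\ast}$ and take $f\coloneqq g$.

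What must be verified is that the three conditions line up. First, the dual of $\Ss^{\ast}$ is, after identifying any elements $u,v\in U$ with $\widehat u=\widehat v$, exactly $\Ss$; since such an identification never increases the 2VC dimension, the dual of $\Ss^{\ast}$ has 2VC dimension at most $h$. Second, a family of pairwise disjoint members of $\Ss^{\ast}$ is the same thing as a family $\{\widehat u\colon u\in I\}$ with $I\subseteq U$ independent in $\Gaif(\Ss)$: the sets $\widehat u$ and $\widehat v$ are disjoint precisely when no $F\in\Ff$ contains both $u$ and $v$, and the hypothesis $\bigcup\Ff=U$ guarantees that each $\widehat u$ is nonempty, so distinct elements of $I$ yield distinct sets; hence the packing number of $\Ss^{\ast}$ equals the independence number of $\Gaif(\Ss)$, which is at most $k$. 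Third, a transversal of $\Ss^{\ast}$ is exactly a subfamily $\Qq\subseteq\Ff$ with $\bigcup\Qq=U$, since $\Qq$ meets $\widehat u$ if and only if some member of $\Qq$ contains $u$; moreover such a $\Qq$ exists at all (namely, $\Qq=\Ff$ works) precisely because $\bigcup\Ff=U$. Feeding these observations into \cite[(1.1)]{DingSW94} yields a subfamily $\Qq\subseteq\Ff$ with $\bigcup\Qq=U$ and $|\Qq|\leq g(h,k)$, as desired.

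I do not expect a genuine obstacle: the argument is pure bookkeeping once the statement of \cite[(1.1)]{DingSW94} is in hand. The only points needing a word of care are two degenerate features of $\Ss$ that influence none of the quantities involved — a possible empty set in $\Ff$, which may be deleted, and distinct elements of $U$ lying in exactly the same members of $\Ff$, which may be identified — and after this harmless normalization the identity $(\Ss^{\ast})^{\ast}=\Ss$ holds literally and the three correspondences above become exact equalities. The one truly external input is thus the precise form of \cite[(1.1)]{DingSW94}; translating it is exactly the duality described above.
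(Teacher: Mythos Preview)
The paper does not give a proof of this statement at all: it is quoted as \cite[(1.1)]{DingSW94} with the parenthetical remark ``reformulated through duality,'' and left at that. Your proposal correctly unpacks what this reformulation amounts to --- applying the Ding--Seymour--Winkler bound $\tau\leq g(\lambda,\nu)$ to the dual system $\Ss^{\ast}$, and checking that $\lambda(\Ss^{\ast})$ is the 2VC dimension of $\Ss$, that $\nu(\Ss^{\ast})$ is the independence number of $\Gaif(\Ss)$, and that $\tau(\Ss^{\ast})$ is the minimum size of a covering subfamily --- so there is nothing to compare beyond observing that you have written out exactly the translation the paper leaves implicit.
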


We note that all the introduced parameters --- the semi-ladder/comatching/ladder indices and the 2VC dimension --- are closed under taking induced subsystems: if $\Ss=(U,\Ff)$ is a set system and $W\subseteq U$ and $\Qq\subseteq \Ff$, then the value of any of these parameters for $\Ss[W,\Qq]$ is not greater than for $\Ss$.

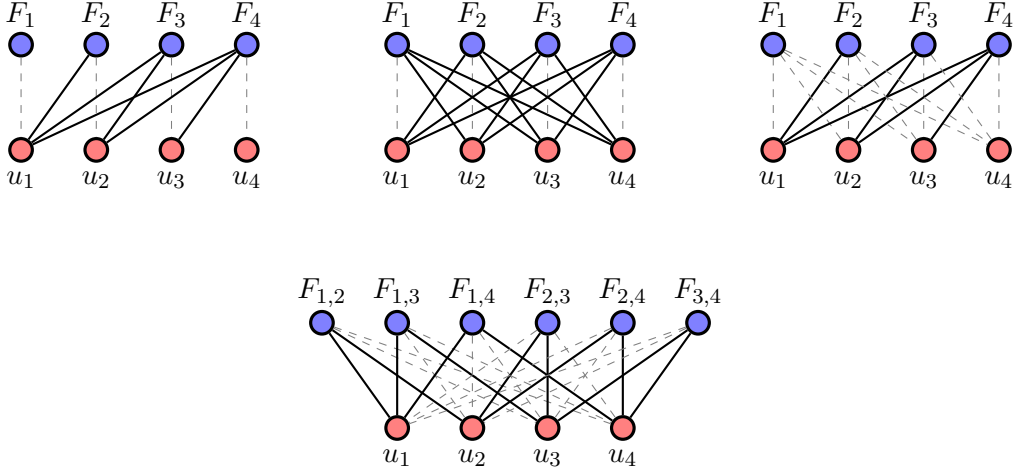
\begin{figure}
	\centering
\begin{tikzpicture}
	\tikzstyle{vertex}=[circle,draw=black,fill=yellow,minimum size=0.3cm,inner sep=0pt]
	\tikzstyle{rvertex}=[circle,very thick,draw=black,fill=red!50,minimum size=0.3cm,inner sep=1pt]
	\tikzstyle{bvertex}=[circle,very thick,draw=black,fill=blue!50,minimum size=0.3cm,inner sep=1pt]
	
	\begin{scope}[shift={(-5,5)}]
	
	\foreach \i in {1,2,3,4} {
		\node[bvertex] (f\i) at (-2.5+\i,0.7) {};
		\node at (-2.5+\i,1.1) {$F_\i$};
		\node[rvertex] (u\i) at (-2.5+\i,-0.7) {};
		\node at (-2.5+\i,-1.1) {$u_\i$};
		\draw[dashed,black!50] (f\i) -- (u\i);
	}
	\foreach \i/\j in {1/2,1/3,1/4,2/3,2/4,3/4} {
		\draw[thick,black] (f\j) -- (u\i);
	}
	\end{scope}
	
	\begin{scope}[shift={(0,5)}]
		
		\foreach \i in {1,2,3,4} {
			\node[bvertex] (f\i) at (-2.5+\i,0.7) {};
			\node at (-2.5+\i,1.1) {$F_\i$};
			\node[rvertex] (u\i) at (-2.5+\i,-0.7) {};
			\node at (-2.5+\i,-1.1) {$u_\i$};
			\draw[dashed,black!50] (f\i) -- (u\i);
		}
		\foreach \i/\j in {1/2,1/3,1/4,2/3,2/4,3/4} {
			\draw[thick,black] (f\j) -- (u\i);
			\draw[thick,black] (f\i) -- (u\j);
		}
	\end{scope}

\begin{scope}[shift={(5,5)}]
	
	\foreach \i in {1,2,3,4} {
		\node[bvertex] (f\i) at (-2.5+\i,0.7) {};
		\node at (-2.5+\i,1.1) {$F_\i$};
		\node[rvertex] (u\i) at (-2.5+\i,-0.7) {};
		\node at (-2.5+\i,-1.1) {$u_\i$};
		\draw[dashed,black!50] (f\i) -- (u\i);
	}
	\foreach \i/\j in {1/2,1/3,1/4,2/3,2/4,3/4} {
		\draw[thick,black] (f\j) -- (u\i);
		\draw[dashed,black!50] (f\i) -- (u\j);
	}
\end{scope}
	
	\begin{scope}[shift={(0,1.3)}]

		\foreach \i in {1,2,3,4} {
			\node[rvertex] (u\i) at (-2.5+\i,-0.7) {};
			\node at (-2.5+\i,-1.1) {$u_\i$};
		}
		\foreach \a/\i/\j in {1/1/2,2/1/3,3/1/4,4/2/3,5/2/4,6/3/4} {
			\node[bvertex] (f\i\j) at (-3.5+\a,0.7) {};
			\node at (-3.5+\a,1.1) {$F_{\i,\j}$};
		}
		\foreach \i/\j/\a/\b in {1/2/3/4,1/3/2/4,1/4/2/3,2/3/1/4,2/4/1/3,3/4/1/2} {
			\draw[thick,black] (f\i\j) -- (u\i);
			\draw[thick,black] (f\i\j) -- (u\j);
			\draw[dashed,black!50] (f\i\j) -- (u\a);
			\draw[dashed,black!50] (f\i\j) -- (u\b);
		}
	\end{scope}
	
\end{tikzpicture}
\caption{A semi-ladder of order $4$, a comatching of order $4$, a ladder of order $4$, and a $2$-shattered set of size $4$. Solid edges represent membership, dashed gray edges represent non-membership, lack of an edge represents no specification.}\label{fig:obstructions}
\end{figure}

\paragraph*{Set systems of balls.} For a graph $G$, vertex $u$, and $d\in \N$, the {\em{radius-$d$ ball}} around $u$ is defined as
\[\Ball^G_d(u)\coloneqq \{v\in V(G)~|~\dist_G(u,v)\leq d\},\]
where $\dist_G(\cdot,\cdot)$ is the distance metric in $G$. We define the set system of radius-$d$ balls in $G$ as follows:
\[\Balls_d(G)\coloneqq (V(G),\{\Ball^G_d(u)\colon u\in V(G)\}).\]
Naturally, for a graph class $\Cc$, we may define the corresponding classes of set systems:
\[\Balls_d(\Cc)\coloneqq \{\Balls_d(G)\colon G\in \Cc\}.\]
It turns out that provided  $d$ is fixed and $\Cc$ has bounded expansion, the class of set systems $\Balls_d(\Cc)$ has bounded semi-ladder index and bounded 2VC dimension. This was proved even in a larger generality of nowhere dense classes, which subsume classes of bounded expansion. The result for the semi-ladder index is due to Fabia\'nski et al.~\cite{FabianskiPST19}, and for the 2VC dimension is due to Pilipczuk and Siebertz~\cite{PilipczukS21}


\begin{lemma}\label{lem:sl-bounded}
	For every $d\in \N$ and graph class of bounded expansion $\Cc$, the class of set systems $\Balls_d(\Cc)$ has bounded semi-ladder index.
\end{lemma}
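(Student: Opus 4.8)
The plan is to argue directly that, for a fixed $d$ and a fixed class $\Cc$ of bounded expansion, no $G\in\Cc$ can contain a semi-ladder of large order in $\Balls_d(G)$. Unpacking the definition, such a semi-ladder of order $\ell$ is a pair of vertex sequences $u_1,\dots,u_\ell$ and $w_1,\dots,w_\ell$ in $G$ with $\dist_G(u_i,w_i)>d$ for every $i$ and $\dist_G(u_i,w_j)\le d$ whenever $i<j$; the goal is to bound $\ell$ by a constant depending only on $d$ and $\Cc$. Two facts come for free: since $u_i\in\Ball^G_d(w_j)\setminus\Ball^G_d(w_i)$ for all $i<j$, the balls $\Ball^G_d(w_i)$ are pairwise distinct, hence so are the centres $w_i$; and likewise the $u_i$ are pairwise distinct. (By the Ramsey observation quoted above one could instead bound the ladder and comatching indices separately, but the argument below handles semi-ladders uniformly.)

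The main tool is that classes of bounded expansion — being nowhere dense — are \emph{uniformly quasi-wide} (see~\cite{sparsity}): for every $r\in\N$ there are $s_r\in\N$ and $N_r\colon\N\to\N$ such that for every $G$ in the class and every $A\subseteq V(G)$ with $|A|\ge N_r(m)$ there are $S\subseteq V(G)$ with $|S|\le s_r$ and $B\subseteq A$ with $|B|\ge m$ such that any two vertices of $B$ lie at distance more than $r$ in $G-S$. I would apply this with $r=2d$ to $A=\{w_1,\dots,w_\ell\}$: provided $\ell\ge N_{2d}(m)$ this yields a separator $S$ with $|S|\le s_{2d}$ and an index set $I$ with $|I|=m$ such that the vertices $\{w_i\colon i\in I\}$ are pairwise at distance $>2d$ in $G-S$. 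Since the $u_i$ are distinct, at most $s_{2d}$ of them lie in $S$; let $i_0$ be the least element of $I$ with $u_{i_0}\notin S$. Then all but at most $s_{2d}+1$ indices $j\in I$ satisfy $j>i_0$, and for each of those $\dist_G(u_{i_0},w_j)\le d$, whereas $\dist_G(u_{i_0},w_{i_0})>d$.

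Now comes the contradiction. For each such $j$ fix a shortest $u_{i_0}$--$w_j$ path $Q_j$ of length $\le d$. At most one of them can avoid $S$: two $S$-avoiding paths $Q_j,Q_{j'}$ would glue together at $u_{i_0}\notin S$ into a walk of length $\le 2d$ in $G-S$ joining the distinct vertices $w_j,w_{j'}\in B$, contradicting the distance gap. Hence almost all $Q_j$ meet $S$; pigeonholing over $S$, some $v^\ast\in S$ lies on a positive fraction of them. Since each such $Q_j$ is a shortest path of length $\le d$ through $v^\ast$, setting $d'=d-\dist_G(u_{i_0},v^\ast)$ we get $\dist_G(v^\ast,w_j)\le d'$ for all these $j$, with $d'\le d-1$ because $v^\ast\ne u_{i_0}$, and moreover $\dist_G(v^\ast,w_{i_0})>d'$ — here the anti-diagonal requirement $\dist_G(u_{i_0},w_{i_0})>d$ is used. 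We have thereby reproduced the same situation one radius lower: a centre $v^\ast$, a radius $d'<d$, a still-large family of centres within distance $d'$ of $v^\ast$ in $G$, together with a distinguished centre $w_{i_0}$ at distance $>d'$ from $v^\ast$. Iterating (re-running uniform quasi-wideness on the surviving family, with the appropriate smaller radius) at most $d$ times brings the radius down to $0$, where "within distance $0$" would force distinct centres to coincide — impossible once more than one survives. So $m$, and hence $\ell$, cannot exceed a bound depending only on $d$ and $\Cc$.

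The delicate part, and the main obstacle, is precisely this iteration: after pigeonholing $v^\ast\in S$ one cannot simply keep working in $G-S$, since the new connecting paths run through $v^\ast\in S$, so the quasi-wideness step must be re-applied to the peeled-off, smaller instance while carefully tracking how the surviving index set and the radius shrink. Carrying this out is exactly the argument of Fabiański, Pilipczuk, Siebertz, and Toruńczyk~\cite{FabianskiPST19}, which establishes the statement in the wider generality of nowhere dense classes. I would also note that sparsity is genuinely needed and not a mere convenience: the ladder part alone — bounding $\ell$ when $\dist_G(u_i,w_j)\le d$ holds exactly for $i<j$ — follows softly from the model-theoretic stability of nowhere dense classes applied to the fixed first-order formula expressing "$\dist(x,y)\le d$"; but a general semi-ladder with $\dist_G(u_i,w_j)\le d$ exactly when $i\neq j$ is, after complementation, a large induced matching in a definable relation, and such configurations are not excluded by stability, so the quasi-wideness input cannot be removed.
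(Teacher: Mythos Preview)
Your proposal is correct and takes essentially the same approach as the paper: both reduce to the result of Fabia\'nski, Pilipczuk, Siebertz, and Toru\'nczyk~\cite{FabianskiPST19} via the fact that bounded expansion implies nowhere denseness implies uniform quasi-wideness. The paper's proof is a bare citation; you additionally sketch the radius-reduction argument behind~\cite{FabianskiPST19}, correctly flag its delicate iteration, and ultimately defer to the same reference for the details.
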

\begin{proof}
	As proved in~\cite[Lemma~14]{FabianskiPST19}, the class $\Balls_d(\Cc)$ has bounded semi-ladder index, provided $\Cc$ is {\em{uniformly quasi-wide}}. And it is known that classes that are uniformly quasi-wide exactly coincide with nowhere dense classes~\cite{NPoM-nd}, while every class of bounded expansion is nowhere dense.
\end{proof}


\begin{lemma}\label{lem:2VC-bounded}
	For every $d\in \N$ and graph class of bounded expansion $\Cc$, the class of set systems $\Balls_d(\Cc)$ has bounded 2VC-dimension.
\end{lemma}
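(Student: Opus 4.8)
The plan is to deduce this from the result of Pilipczuk and Siebertz~\cite{PilipczukS21}, after a one-line reformulation, exactly as \cref{lem:sl-bounded} was deduced from the literature. The conceptual point is that a $2$-shattered set of size $k$ forces $\Balls_d(G)$ to realize ``too many'' distinct traces on that set, while bounded expansion caps the number of such traces linearly.

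In detail, suppose $G\in\Cc$ and $X\subseteq V(G)$ is $2$-shattered in $\Balls_d(G)$, and write $k=|X|$. Unwinding the definition, for every pair of distinct $x,y\in X$ there is a vertex $u_{xy}\in V(G)$ with $\Ball^G_d(u_{xy})\cap X=\{x,y\}$; hence the family of traces $\{\Ball^G_d(u)\cap X : u\in V(G)\}$ contains all $\binom{k}{2}$ two-element subsets of $X$, and so has size at least $\binom{k}{2}$. On the other hand, since $\Cc$ has bounded expansion and $d$ is fixed, there is a constant $c=c(\Cc,d)$ such that for every $G\in\Cc$ and every $X\subseteq V(G)$ one has $|\{\Ball^G_d(u)\cap X : u\in V(G)\}|\le c\cdot|X|$; this linear bound on the number of distance-$d$ ball traces (neighborhood complexity) is what~\cite{PilipczukS21} supplies. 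A small amount of glue is needed since such bounds are usually phrased for open balls, but the vertices $u$ lying in $X$ number only $|X|$ and contribute at most $|X|$ additional traces, so passing to closed balls affects only the constant. Combining the two estimates gives $\binom{k}{2}\le c k$, i.e.\ $k\le 2c+1$, so the 2VC dimension of $\Balls_d(G)$ is at most $2c+1$, a bound depending only on $\Cc$ and $d$.

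The real content sits entirely in the neighborhood-complexity estimate, which is not elementary: it is proved through the generalized coloring numbers $\wcol_r$ (and in fact holds already for nowhere dense classes), so I would simply quote~\cite{PilipczukS21}. If one insisted on a self-contained argument, this would be the main obstacle --- though one can afford to be wasteful, since any bound of the form $o(|X|^2)$ on the number of ball traces already contradicts the lower bound $\binom{k}{2}$ forced by $2$-shattering. I would deliberately avoid the more hands-on alternative of trying to turn the witnesses $u_{xy}$ into branch sets of a dense shallow clique minor: the witness balls overlap heavily, so organizing them into pairwise disjoint connected subgraphs of bounded radius is precisely where that approach gets stuck, whereas the trace-counting route never has to confront this.
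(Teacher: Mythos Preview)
Your argument via neighborhood complexity is correct: a $2$-shattered set of size $k$ forces at least $\binom{k}{2}$ distinct ball-traces on $X$, while a linear bound $c\,|X|$ on the number of such traces (which indeed holds in bounded expansion classes) yields $k\le 2c+1$. So the proof goes through.

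However, this is \emph{not} the route the paper takes, and your commentary on the alternative is off. The paper simply quotes~\cite[Corollary~11]{PilipczukS21}, which says: if $G$ excludes $K_t$ as a depth-$d$ minor, then the 2VC dimension of $\Balls_d(G)$ is at most $t-1$. Bounded expansion (indeed nowhere density) supplies such a $t$, and that is the whole proof. The point is that the cited corollary is proved by exactly the ``hands-on'' construction you advise avoiding: from a $2$-shattered set $\{x_1,\ldots,x_t\}$ one builds a depth-$d$ model of $K_t$ whose branch sets are centered at the $x_i$, not at the witnesses $u_{ij}$. The disjointness issue you flag disappears once the branch sets are organized around the shattered points rather than around the witness centers; each $u_{ij}$ lies at distance $\le d$ from $x_i$ and $x_j$ and from no other $x_k$, which is precisely what makes the minor construction work. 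So the approach does not ``get stuck'' --- it is the standard one.

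A minor attribution remark: the linear neighborhood-complexity bound for bounded expansion classes is usually credited to Reidl, S\'anchez Villaamil, and Stavropoulos rather than to~\cite{PilipczukS21}; the latter paper contains the 2VC/minor statement and related VC-type bounds. If you keep your route, adjust the citation accordingly.
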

\begin{proof}
	As proved in \cite[Corollary~11]{PilipczukS21}, if a graph $G$ does not contain $K_t$ as a depth-$d$ minor, then the 2VC dimension of the set system $\Balls_d(G)$ is bounded by $t-1$. Since $\Cc$ has bounded expansion, it is in particular nowhere dense, hence there exists a $t\in \N$ such that no member of $\Cc$ contains $K_t$ as a depth-$d$ minor. So the 2VC dimension of $\Balls_d(\Cc)$ is bounded by~$t-1$.
\end{proof}

\section{Subchromatic number}\label{sec:subchromatic}

In this section we discuss another parameter of graphs and set systems: the subchromatic number. Contrary to the parameters discussed in \cref{sec:prelims}, the definition is based on the existence of certain~colorings.

\begin{definition}
	A {\em{cluster graph}} is a graph whose every connected component is a clique.
	For a graph $G$, a {\em{subcoloring}} of $G$ is a function $\lambda\colon V(G)\to C$, for some palette of colors $C$, satisfying the following property: for every color $i\in C$, the induced subgraph $G[\lambda^{-1}(i)]$ is a cluster graph. The {\em{subchromatic number}} of a graph $G$ is the minimum number of colors needed for a subcoloring of $G$. The {\em{subchromatic number}} of a set system $\Ss$ is the subchromatic number of its Gaifman graph $\Gaif(\Ss)$.
\end{definition}

For a graph $G$ and $r\in \N$, the {\em{$r$th power}} of $G$, denoted $G^r$, is the graph with the same vertex set as $G$ and adjacency defined as follows: any two vertices $u,v$ are adjacent in $G^r$ if and only if $\dist(u,v)\leq r$. Naturally, for a graph class $\Cc$, we denote $\Cc^r\coloneqq \{G^r\colon G\in \Cc\}$. As proved by Ne\v{s}et\v{r}il et al.~\cite{NesetrilMPZ20}, powers of bounded expansion classes have bounded subchromatic number, in the following sense.

\begin{theorem}[{\cite[Corollary~11]{NesetrilMPZ20}}]\label{thm:subchromatic-simple}
	For every $r\in \N$ and graph class of bounded expansion $\Cc$, the graph class $\Cc^r$ has bounded subchromatic number.
\end{theorem}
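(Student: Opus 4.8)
The plan is to produce, for every $G\in\Cc$, a coloring of $V(G)$ using a number of colors bounded in terms of $\Cc$ and $r$ such that $G^r$ has no monochromatic induced path on three vertices; since a graph is a disjoint union of cliques precisely when it is $P_3$-free, such a coloring is exactly a subcoloring of $G^r$. The one external ingredient is the standard characterization of bounded expansion via weak coloring numbers (see~\cite{sparsity}): there is $c\in\N$, depending only on $\Cc$ and $r$, such that every $G\in\Cc$ admits a linear order $\sigma$ of $V(G)$ with $\wcol_{4r}(G,\sigma)\le c$. Here $\WReach_t[G,\sigma,v]$ denotes the set of $u\le_\sigma v$ reachable from $v$ by a path of length at most $t$ on which $u$ is $\sigma$-minimal; recall that $|\WReach_t[G,\sigma,v]|\le\wcol_t(G,\sigma)$ and that $\wcol_s\le\wcol_t$ for $s\le t$.

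Fix such a $G$ and $\sigma$, and put $B(v)\coloneqq\WReach_{2r}[G,\sigma,v]$, so that $v=\max_\sigma B(v)\in B(v)$ and $|B(v)|\le c$. First I construct an auxiliary coloring $\psi\colon V(G)\to[c]$ that is injective on every set $B(v)$: if $u,u'\in B(v)$ with $u<_\sigma u'$, concatenating the weak-reachability paths from $v$ to $u$ and from $v$ to $u'$ and extracting a path witnesses $u\in\WReach_{4r}[G,\sigma,u']$, so the graph in which two vertices are joined whenever they co-occur in some $B(v)$ is a subgraph of the graph $H$ with $u\sim u'$ iff $u\in\WReach_{4r}[G,\sigma,u']$ or $u'\in\WReach_{4r}[G,\sigma,u]$. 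The latter graph has, for every vertex, at most $\wcol_{4r}(G,\sigma)-1$ $\sigma$-smaller neighbours, hence is $(c-1)$-degenerate and properly $c$-colorable, and any proper $c$-coloring serves as $\psi$. Now color each vertex $v$ by its \emph{type}
\[\theta(v)\coloneqq\bigl\{(\psi(u),\min(\dist_G(v,u),2r+1)) : u\in B(v)\bigr\}\subseteq[c]\times\{0,1,\ldots,2r+1\}.\]
There are at most $2^{c(2r+2)}$ types, a bound depending only on $\Cc$ and $r$.

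It remains to show $\theta$ is a subcoloring of $G^r$. Suppose not: there are distinct $a,b,d$ with $\theta(a)=\theta(b)=\theta(d)$, $\dist_G(a,b)\le r$, $\dist_G(b,d)\le r$, and $\dist_G(a,d)>r$. Fix a shortest $a$--$b$ path $P$ and a shortest $b$--$d$ path $Q$, and let $x=\min_\sigma V(P)$ and $y=\min_\sigma V(Q)$; passing to subpaths shows $x\in B(a)\cap B(b)$ and $y\in B(b)\cap B(d)$. By symmetry, exchanging the roles of $a,d$ and of $x,y$, assume $y\le_\sigma x$. Following $P$ from $a$ to $b$ and then $Q$ from $b$ to $y$ gives a walk of length at most $2r$ from $a$ to $y$ all of whose vertices are $\ge_\sigma y$; extracting a path shows $y\in B(a)$. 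Since $(\psi(y),\min(\dist_G(a,y),2r+1))\in\theta(a)=\theta(b)$ and $\psi$ is injective on $B(b)\ni y$, the only element of $\theta(b)$ with first coordinate $\psi(y)$ is $(\psi(y),\min(\dist_G(b,y),2r+1))$; as $\dist_G(b,y)\le\dist_G(b,d)\le r$, no truncation occurs and we get $\dist_G(a,y)=\dist_G(b,y)$. Since $y$ lies on the shortest path $Q$, $\dist_G(y,d)=\dist_G(b,d)-\dist_G(b,y)$, and therefore
\[\dist_G(a,d)\le\dist_G(a,y)+\dist_G(y,d)=\dist_G(b,y)+\dist_G(b,d)-\dist_G(b,y)=\dist_G(b,d)\le r,\]
contradicting $\dist_G(a,d)>r$. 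Hence $\theta$ is a subcoloring of $G^r$ using at most $2^{c(2r+2)}$ colors.

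The delicate point, and the place where real care is needed, is the design of $\theta$. A naive scheme — for instance coloring $v$ by a single ``$r$-center'', such as the $\sigma$-minimal vertex weakly $r$-reachable from $v$ — does not work, since small examples show such a center need not be constant along edges of $G^r$. What makes the argument go through is recording, for each $v$, the $\psi$-labels and distances to $v$ of \emph{all} of $\WReach_{2r}[G,\sigma,v]$, together with the case split on the $\sigma$-order of the two path minima $x$ and $y$: this is exactly what lets the type of one endpoint of the putative bad $P_3$ ``see'' the landmark near the opposite endpoint. The remaining points — the bounded number of types, the degeneracy of the co-occurrence graph, and the harmless distance truncation — are routine uses of the weak-coloring-number toolkit.
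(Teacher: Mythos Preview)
The paper does not give its own proof of this statement; it is quoted as \cite[Corollary~11]{NesetrilMPZ20} and used as a black box. So there is no in-paper proof to compare against directly.

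Your argument is correct. The construction of $\psi$ via $(c-1)$-degeneracy of the co-occurrence graph, the definition of the type $\theta(v)$, and the contradiction for a monochromatic induced $P_3$ in $G^r$ all check out; the case split on $\min_\sigma\{x,y\}$ is exactly what is needed to place the relevant landmark into $B(a)$ (or symmetrically $B(d)$), and the distance bookkeeping is sound because $\dist_G(b,y)\le r$ and $\dist_G(a,y)\le 2r$ prevent truncation.

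It is worth contrasting your route with what the paper does for the \emph{stronger} \cref{thm:subchromatic}, which is the statement actually used later. There the paper abandons weak coloring numbers entirely and argues through first-order transductions: it shows that the class of Gaifman graphs in question has structurally bounded expansion and excludes a fixed biclique, and then bounds the subchromatic number by passing through cographs of bounded depth, then bounded shrubdepth, then low shrubdepth colorings. Your approach is far more elementary and gives \cref{thm:subchromatic-simple} cleanly, but it does not obviously extend to the hereditary closure of $\Balls_d(\Cc)$: once one restricts to an arbitrary $W\subseteq V(G)$ and an arbitrary subfamily of balls, the Gaifman graph is no longer $G^{2d}$, and the landmark vertex $y$ your contradiction relies on need not lie in $W$. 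So your proof buys simplicity for the cited result, while the paper's transduction machinery buys the extra robustness needed downstream.
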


It can be easily seen that for any graph $G$ and $d\in \N$, the graph $\Gaif(\Balls_d(G))$ is equal to $G^{2d}$. Therefore, \cref{thm:subchromatic-simple} implies that whenever $\Cc$ is a graph class of bounded expansion and $d\in \N$ is fixed, the class of set systems $\Balls_d(\Cc)$ has bounded subchromatic number. Unfortunately, for the proof of \cref{thm:main} we will need a stronger property --- that this statement also hold for the hereditary closure of $\Balls_d(\Cc)$. (Note here that the subchromatic number of a set system is {\em{not}} closed under taking induced subsystems.) More precisely, we will need the following.

\begin{theorem}\label{thm:subchromatic}
	For every $d\in \N$ and graph class of bounded expansion $\Cc$, the hereditary closure of the class of set systems $\Balls_d(\Cc)$ has bounded subchromatic number.
\end{theorem}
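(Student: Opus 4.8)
The plan is to upgrade the proof of \cref{thm:subchromatic-simple} so that the bound applies uniformly not just to $G^{2d} = \Gaif(\Balls_d(G))$ but to the Gaifman graph of any induced subsystem $\Balls_d(G)[W,\Qq]$. The key observation is that such a Gaifman graph is an \emph{induced subgraph of a power of a subgraph we can control}. Concretely, fix $\Ss = \Balls_d(G)$, a vertex subset $W \subseteq V(G)$, and a subfamily $\Qq$ of balls; say $\Qq = \{\Ball^G_d(c) : c \in Z\}$ for some set of ``centers'' $Z \subseteq V(G)$. Two elements $x,y \in W$ are adjacent in $\Gaif(\Ss[W,\Qq])$ precisely when there is $c \in Z$ with $\dist_G(x,c) \leq d$ and $\dist_G(y,c) \leq d$. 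The natural auxiliary graph is then $H$, obtained from $G$ by adding, for each center $c \in Z$, a fresh vertex $\hat{c}$ adjacent to all of $\Ball^G_d(c)$ — equivalently, one can take $H = G$ together with an apex-like gadget per center; then $x,y$ are adjacent in $\Gaif(\Ss[W,\Qq])$ iff $\dist_H(x,y) \leq 2d+2$ through a center $\hat{c}$, and in any case iff $\dist_H(x,\hat{c}), \dist_H(y,\hat{c}) \leq d+1$ for a common $\hat c$. So $\Gaif(\Ss[W,\Qq])$ is an induced subgraph (on vertex set $W$) of a graph that is in turn a subgraph of $H^{2d+2}$. It therefore suffices to show: the class of all such auxiliary graphs $H$ (over all $G \in \Cc$ and all choices of center sets $Z$) has bounded expansion — because bounded expansion is preserved under taking powers (this is implicit in, and needed for, \cref{thm:subchromatic-simple}), under subgraphs, and under induced subgraphs, and subchromatic number is monotone under induced subgraphs.

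The core step, then, is to argue that attaching a ball-dominating vertex $\hat c$ for each $c \in Z$ keeps us within bounded expansion. This is the standard ``$d$-subdivision / ball-gadget'' manipulation from Sparsity theory: rather than making $\hat c$ adjacent to all of $\Ball^G_d(c)$ at once (which would blow up shallow minors), realize $\hat c$ by a path of length $d$ from $c$ together with suitable connections — or, cleaner, observe that $H$ is a depth-$O(d)$ minor-model target in the following sense: any shallow minor of $H$ at depth $r$ can be pushed into a shallow minor of $G$ at depth $O(r \cdot d)$ by contracting each gadget vertex $\hat c$ into $\Ball^G_d(c)$ (whose radius is $d$), and then invoking that the radius-$O(rd)$ minors of $G \in \Cc$ have bounded average degree. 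Care is needed because several gadgets can share vertices of $G$, but since we only ever branch-map into \emph{vertex-disjoint} pieces in a minor model, the total inflation is still linear in $d$; this is exactly the kind of bookkeeping done in \cite{NesetrilMPZ20,sparsity}. I would phrase this as a short lemma: \emph{if $\Cc$ has bounded expansion, then so does the class $\widehat{\Cc_d}$ of graphs obtained from $G \in \Cc$ by adding, for an arbitrary set $Z \subseteq V(G)$, one new vertex per $c\in Z$ adjacent exactly to $\Ball^G_d(c)$} — and then deduce the theorem in one line.

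The main obstacle I anticipate is not conceptual but a matter of getting the quantifiers in the right order: I need the \emph{same} bounded-expansion bound for \emph{all} induced subsystems simultaneously, so the auxiliary construction must be uniform in $W$ and $\Qq$. Restricting to $W$ is free (induced subgraph), and restricting $\Qq$ to a subfamily corresponds to adding gadgets only for centers in $Z$ — fewer gadgets is only easier — so the worst case is $Z = V(G)$, i.e.\ adding one ball-gadget for \emph{every} vertex. Thus it is enough to prove bounded expansion for the single class $\{ \widehat{G} : G \in \Cc\}$ where $\widehat G$ is $G$ with a $\Ball^G_d(\cdot)$-gadget at every vertex, and everything else follows by monotonicity. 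A secondary subtlety is to confirm the stated equality $\Gaif(\Balls_d(G)) = G^{2d}$ and its relativized version for induced subsystems, but this is the elementary distance computation sketched above. Putting the pieces together: (i) reduce to center set $Z = V(G)$; (ii) prove $\{\widehat G : G \in \Cc\}$ has bounded expansion via the shallow-minor-contraction argument; (iii) apply \cref{thm:subchromatic-simple} to $\{\widehat G : G\in \Cc\}^{2d+2}$ to get a uniform bound $s$ on subchromatic numbers; (iv) observe every $\Gaif(\Ss[W,\Qq])$ with $\Ss \in \Balls_d(\Cc)$ is an induced subgraph of some member of $\{\widehat G\}^{2d+2}$, and use monotonicity of subchromatic number under induced subgraphs to conclude the bound $s$ holds for the whole hereditary closure.
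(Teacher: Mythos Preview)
Your plan has a genuine gap at step~(iv), and it is precisely the reason the paper abandons this route. You assert that $\Gaif(\Ss[W,\Qq])$ is an \emph{induced} subgraph of $\widehat{G}^{2d+2}$ (restricted to $W$), but this is false. If $u,v\in W$ satisfy $\dist_G(u,v)\le 2$ yet there is no center $c\in Z$ with $u,v\in\Ball^G_d(c)$, then $u,v$ are non-adjacent in $\Gaif(\Ss[W,\Qq])$ while $\dist_{\widehat G}(u,v)\le 2\le 2d+2$, so they \emph{are} adjacent in $\widehat{G}^{2d+2}$. In general $\Gaif(\Ss[W,\Qq])$ is only a (non-induced) subgraph of $\widehat{G}^{2d+2}[W]$, and the subchromatic number is \emph{not} monotone under taking subgraphs: $K_n$ has subchromatic number~$1$, but its spanning subgraph $P_n$ has subchromatic number~$2$. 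So the final monotonicity step collapses. (Your earlier parenthetical that ``bounded expansion is preserved under taking powers'' is also false---the square of a star is a clique---though that particular claim is not load-bearing in your outline.)

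One could try to rescue the idea by dropping the $G$-edges and using instead the bipartite incidence graph $H$ between $W$ and $Z$ with edges $u\sim c$ iff $\dist_G(u,c)\le d$; then $\Gaif(\Ss[W,\Qq])=H^2[W]$ is genuinely an induced subgraph of $H^2$. But now one must show that the class of all such $H$ has bounded expansion, and your ``push each $\hat c$ into $\Ball^G_d(c)$'' sketch does not establish this: the balls overlap, so the resulting branch sets in $G$ are not vertex-disjoint, and the bookkeeping you allude to does not obviously go through. The paper explicitly notes that the authors could not make a direct derivation from \cref{thm:subchromatic-simple} work, and instead takes a completely different route: they observe that the class $\Dd$ of Gaifman graphs in question is a first-order transduction of $\Cc$ (hence has \emph{structurally} bounded expansion), prove via a rook-graph argument that $\Dd$ excludes some $K_{t,t}$ as an induced subgraph, and then establish a new result (\cref{thm:sbe-subchromatic}) that any $K_{t,t}$-free class of structurally bounded expansion has bounded subchromatic number, proved by induction through cographs, bounded shrubdepth, and low-shrubdepth colorings.
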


We were not able to derive \cref{thm:subchromatic} directly from \cref{thm:subchromatic-simple}, so we resorted to proposing a completely new proof, based on a different proof method and relying on much stronger tools. In particular, we will use techniques related to logic and first-order transductions.

The remainder of this section is devoted to the proof of \cref{thm:subchromatic}.

\paragraph*{Transductions.} We assume reader's familiarity with the basic terminology of first-order logic on graphs and first-order transductions; see~\cite{PilipczukSurvey} for a broad introduction. Let us only recall that a (first-order) transduction $\Tf$ consists of a finite set of unary predicates $C$ and a first-order formula $\varphi(x,y)$ working over $C$-colored graphs, where a {\em{$C$-colored graph}} is a graph $G$ expanded with an interpretation of each predicate from $C$ as a subset of vertices of $G$. We assume $\varphi(x,y)$ to be symmetric: $G\models \varphi(u,v)\Leftrightarrow \varphi(v,u)$ for every $C$-colored graph $G$ and vertices $u,v$. Now, we say that $H$ is {\em{transducible}} from $G$ using $\Tf$ if $H$ can be obtained from $G$ by means of the following procedure:
\begin{itemize}
	\item Arbitrarily choose the interpretation of the predicates from $C$ as subsets of vertices of $G$, thus obtaining a $C$-colored graph $G^+$.
	\item Obtain a graph $H^+$ from $G^+$ by interpreting the new adjacency relation using $\varphi$: $V(H^+)=V(G^+)$, and two vertices $u,v$ are adjacent in $H^+$ if and only if $\varphi(u,v)$ holds in $G^+$.
	\item Drop all the predicates from $C$ and output an arbitrary induced subgraph of $H^+$ as $H$.
\end{itemize}
For a graph $G$, by $\Tf(G)$ we denote the set of all graphs $H$ that are transducible from $G$ using $\Tf$. And for a graph class $\Cc$, we define $\Tf(\Cc)\coloneqq \bigcup_{G\in \Cc} \Tf(G)$. For two graph classes $\Cc,\Dd$, we say that $\Dd$ is {\em{transducible}} from $\Cc$ if $\Dd\subseteq \Tf(\Cc)$ for some transduction $\Tf$. Since transductions are compositional --- the composition of two transductions is again a transduction --- the relation of transducibility is transitive: If $\Dd$ is transducible from $\Cc$, and $\Ee$ is transducible from $\Dd$, then $\Ee$ is transducible from $\Cc$.

We call a graph class $\Cc$ {\em{monadically dependent}} if the class of all graphs is not transducible from $\Cc$. Note that by transitivity, if $\Dd$ is transducible from $\Cc$ and $\Dd$ is not monadically dependent, then $\Cc$ is not monadically dependent either.
As proved by Adler and Adler~\cite{adler2014interpreting}, every nowhere dense class of graphs is monadically dependent. This in particular applies to every class of bounded expansion.

\paragraph*{Proof plan.}
We say that a graph class $\Dd$ has {\em{structurally bounded expansion}} if $\Dd$ is transducible from some class $\Cc$ of bounded expansion. The main technical hurdle will be in proving the following statement.

\begin{restatable}{theorem}{sbe}\label{thm:sbe-subchromatic}
	Let $t\in \N$ and $\Dd$ be a graph class of structurally bounded expansion such that no member of $\Dd$ contains $K_{t,t}$ as an induced subgraph. Then $\Dd$ has bounded subchromatic number. 
\end{restatable}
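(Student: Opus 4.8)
The plan is to reduce the statement, via the structure theory for classes of structurally bounded expansion, to the already-known result (Theorem~\ref{thm:subchromatic-simple}) that powers of bounded-expansion classes have bounded subchromatic number. The starting point is the characterization of structurally bounded expansion due to Ne\v{s}et\v{r}il, Ossona de Mendez, Pilipczuk, Rabinovich, and Siebertz: a class $\Dd$ has structurally bounded expansion if and only if there is a bounded-expansion class $\Cc$, an integer $r$, and a way of encoding every $H\in\Dd$ inside some $G\in\Cc$ so that $H$ is an induced subgraph of a fixed quantifier-free (in fact very restricted) transduction of $G^{\le r}$ applied to a vertex-coloured copy of $G$ — concretely, each $H\in\Dd$ is obtained from some $G\in\Cc$ by taking a subset of vertices and, within that subset, a bounded-depth ``low shrubdepth''/``near-twin'' contraction. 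I would use the cleanest available form: every $H\in\Dd$ is an induced subgraph of a graph obtained from some $G\in\Cc$ by first taking the $r$-th power $G^r$ for a fixed $r$, then partitioning $V(G)$ into parts of bounded ``SC-depth'' and collapsing each part according to a bounded-size pattern. The key point I want to extract is that $H$ embeds as an induced subgraph into a graph $G'$ whose vertex set is a subset of $V(G)$ and whose edge set is contained in the edge set of $G^r$ for a fixed $r$ depending only on $\Dd$.

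From here the argument is: Theorem~\ref{thm:subchromatic-simple} gives a constant $p=p(\Cc,r)$ such that $G^r$ has a subcoloring with $p$ colours. Restricting this subcoloring to $V(H)\subseteq V(G)$ we obtain a $p$-colouring $\lambda$ of $V(H)$ such that for each colour $i$, the graph $G^r[\lambda^{-1}(i)]$ is a cluster graph. However $H[\lambda^{-1}(i)]$ is an induced subgraph of $G^r[\lambda^{-1}(i)]$ (because $E(H)\subseteq E(G^r)$ on the common vertex set, and $H$ is induced in $G'$), and an induced subgraph of a cluster graph is a cluster graph. Hence $\lambda$ is a subcoloring of $H$ with $p$ colours, so $\Dd$ has subchromatic number at most $p$ — and the hypothesis that no member of $\Dd$ contains $K_{t,t}$ as an induced subgraph is not even needed for this part of the argument, only the structural one.

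The hard part is that, as I recall, the exact structure theorem for structurally bounded expansion produced by transductions does \emph{not} directly give ``$H$ is an induced subgraph of a graph with $E\subseteq E(G^r)$''; the transduction also allows complementation inside bounded-size local neighbourhoods (e.g. turning cliques into independent sets), which is precisely why the $K_{t,t}$-freeness hypothesis is stated. So the genuine work is in controlling that complementing step. The plan is to fold the forbidden-induced-$K_{t,t}$ assumption in at this point: classes of structurally bounded expansion are monadically dependent (as $\Cc$ is, by Adler–Adler, and transductions preserve this), and a monadically dependent class with no induced $K_{t,t}$ has, on each $G^r$-neighbourhood, bounded ``VC-type'' complexity, so the local complementation patterns range over a bounded set and can only inflate each vertex's neighbourhood within a fixed radius in $G$. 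Concretely I would argue that under $K_{t,t}$-freeness the encoding graph can be taken so that $V(H)\subseteq V(G)$, the quotient/collapse has bounded fibres, and $E(H)$ between the representatives is sandwiched between $E(G^{r_1})$ and $E(G^{r_2})$ restricted to an appropriate set, after which a slightly more careful version of the clipping argument above (replacing $G^r$ by $G^{r_2}$ and noting induced subgraphs of cluster graphs are cluster graphs) still goes through. I expect this reconciliation of the transduction normal form with the $K_{t,t}$-freeness hypothesis — i.e. showing the ``complement'' part of the transduction is harmless for subcolorings once large induced bicliques are forbidden — to be the main technical obstacle, and the rest to be routine.
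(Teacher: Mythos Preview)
Your proposal has a concrete error in the part you call routine, and the part you call hard is not actually carried out. For the routine part: a subcoloring of $G^r$ does \emph{not} restrict to a subcoloring of a graph $H$ with $V(H)\subseteq V(G)$ and $E(H)\subseteq E(G^r)$. You claim that $H[\lambda^{-1}(i)]$ is an induced subgraph of $G^r[\lambda^{-1}(i)]$ ``because $E(H)\subseteq E(G^r)$'', but edge containment only gives a (not necessarily induced) subgraph, and a non-induced subgraph of a cluster graph need not be a cluster graph: $P_3$ sits inside $K_3$. So even if your structure theorem delivered exactly what you asked for --- $H$ induced in some $G'$ with $E(G')\subseteq E(G^r)$ --- the transfer of the subcoloring from $G^r$ to $H$ fails. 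The subchromatic number is monotone under induced subgraphs but not under subgraphs, which is precisely why the paper says it could not derive \cref{thm:subchromatic} directly from \cref{thm:subchromatic-simple}. Your sandwiching fallback $E(G^{r_1})\subseteq E(H)\subseteq E(G^{r_2})$ runs into the same obstruction.

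For the hard part, you correctly identify that the transduction normal form allows complementation, but the sketch of how $K_{t,t}$-freeness tames it (``bounded VC-type complexity'', ``local complementation patterns range over a bounded set'') is not an argument; there is no lemma here, and no indication of why bounded local patterns would let you reduce to an induced subgraph of a power. The paper's proof is entirely different and does not attempt this reduction. It uses the characterization of structurally bounded expansion via \emph{low shrubdepth colorings} (\cref{thm:sbe}): every $G\in\Dd$ admits a coloring with a bounded palette such that each color class induces a graph in a fixed class $\Ee$ of bounded shrubdepth. The problem then reduces to bounding the subchromatic number of $K_{t,t}$-free graphs of bounded shrubdepth, which in turn reduces (via the connection model) to $K_{t,t}$-free cographs of bounded depth. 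The $K_{t,t}$-freeness is used concretely and locally in the cograph induction (\cref{lem:cograph-subchromatic}): at a join node, at most one child can contain an independent set of size $t$ (else the two independent sets span a $K_{t,t}$), so all other children are covered by $t-1$ cliques each, and these cliques merge across the join into $t-1$ global cliques.
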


Before we discuss the approach to \cref{thm:sbe-subchromatic}, let us derive \cref{thm:subchromatic} from it.

\begin{proof}[Proof of \cref{thm:subchromatic} using \cref{thm:sbe-subchromatic}]
	Let $\Ee$ be the hereditary closure of $\Balls_d(\Cc)$ and $\Dd\coloneqq \Gaif(\Ee)$, that is, $\Dd$ is the class of Gaifman graphs of the set systems from $\Ee$. Our goal is to prove that $\Dd$ has bounded subchromatic number. 
	
	Unraveling the definitions, we have the following: for every graph $H\in \Dd$, there exists a graph $G\in \Cc$ and two subsets of vertices $A,B\subseteq V(G)$ such that $V(H)=A$ and any two distinct vertices $u,v\in A$ are adjacent in $H$ if and only if there exists $w\in B$ such that $\dist_G(u,w)\leq d$ and $\dist_G(v,w)\leq d$.
	
	This characterization is clearly expressible as a first-order transduction, which marks the set $B$ using a unary predicate, interprets the new edge relation using the formula $\varphi(x,y)=\exists z\ B(z)\wedge \delta_d(x,z)\wedge \delta_d(y,z)$, where $\delta_d(x,y)$ is the first-order formula verifying that the distance between $x$ and $y$ is at most $d$, and restricts the vertex set to $A$. Thus, we have the following.
	
	\begin{claim}\label{cl:trans}
		$\Dd$ is transducible from $\Cc$.
	\end{claim}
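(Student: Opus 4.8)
The plan is to unwind the definitions of $\Ee$ and $\Dd$ and exhibit a single explicit transduction witnessing $\Dd \subseteq \Tf(\Cc)$. Fix an arbitrary $H \in \Dd$. Since $\Dd = \Gaif(\Ee)$ and $\Ee$ is the hereditary closure of $\Balls_d(\Cc)$, there exist $G \in \Cc$ and subsets $A, B \subseteq V(G)$ such that $H = \Gaif(\Ss[A,\Qq])$, where $\Ss = \Balls_d(G)$ and $\Qq = \{\Ball^G_d(w) : w \in B\}$; here $A$ plays the role of the trimmed universe $W$, and $B$ is any set of centers realizing the chosen subfamily of balls. Unwinding the definition of the Gaifman graph of an induced subsystem, two distinct vertices $u,v \in A$ are adjacent in $H$ if and only if some trimmed ball $\Ball^G_d(w) \cap A$ with $w \in B$ contains both $u$ and $v$, i.e.\ if and only if there is $w \in B$ with $\dist_G(u,w) \le d$ and $\dist_G(v,w) \le d$.

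Next I would package this characterization, uniformly over all $H \in \Dd$, as the transduction $\Tf$ using a single unary predicate $B$ and the symmetric first-order formula
\[
\varphi(x,y) \;=\; x \neq y \;\wedge\; \exists z\ \big(B(z) \wedge \delta_d(x,z) \wedge \delta_d(y,z)\big),
\]
where $\delta_d(x,y)$ is the standard first-order formula asserting $\dist(x,y) \le d$ (for fixed $d$ this is expressible: a distance-$\le d$ witness is a walk of length at most $d$, encoded with $d-1$ existential quantifiers). Given $G \in \Cc$, one colors $G$ by interpreting the predicate $B$ as the chosen set of centers, rewrites the edge relation via $\varphi$ — which, on distinct vertices, says exactly ``there is a common center within distance $d$'' — and finally outputs the induced subgraph on $A$. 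The resulting graph is precisely $H$. Since $H$ was arbitrary, $\Dd \subseteq \Tf(\Cc)$, so $\Dd$ is transducible from $\Cc$.

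I do not expect a real obstacle here; the verification is routine once the definitions are lined up. The only points deserving a word of care are: (i) ``output an arbitrary induced subgraph'' is part of the transduction formalism, so the restriction from $V(G)$ to $A$ is legitimate; (ii) the conjunct $x \neq y$ in $\varphi$ is included to avoid spurious self-loops arising from centers that themselves lie in $A$ (equivalently, one may invoke that transductions produce simple graphs); and (iii) $\delta_d$ is first-order precisely because $d$ is a fixed constant. With these observations the claim follows immediately.
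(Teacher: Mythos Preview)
Your proof is correct and takes essentially the same approach as the paper: both unwind the definition of $\Dd$ to obtain the characterization via sets $A,B\subseteq V(G)$ and a common center at distance $\le d$, and then package this as a transduction with a single unary predicate $B$ and the formula $\exists z\ B(z)\wedge\delta_d(x,z)\wedge\delta_d(y,z)$, restricting the output to $A$. Your extra care with the $x\neq y$ conjunct and the remark that $\delta_d$ is first-order for fixed $d$ are harmless refinements of the same argument.
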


	Further, we observe that graphs from $\Dd$ exclude some complete bipartite as an induced subgraph.
	
	\begin{claim}\label{cl:biclique-free}
		There exists $t\in \N$ such that no graph from $\Dd$ contains $K_{t,t}$ as an induced subgraph
	\end{claim}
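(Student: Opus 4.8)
The plan is to argue by contradiction. Suppose that for arbitrarily large $t$ some member of $\Dd$ contains $K_{t,t}$ as an induced subgraph; I will extract from this an arbitrarily large clique as a bounded-depth minor of a graph in $\Cc$, contradicting that $\Cc$ is nowhere dense (which follows from bounded expansion).

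First I would unravel such a witness. As in the characterization preceding \cref{cl:trans}, there are $G\in\Cc$, sets $A,B\subseteq V(G)$, and pairwise distinct vertices $a_1,\dots,a_t,b_1,\dots,b_t\in A$ such that for every $i,j$ some $w_{ij}\in B$ satisfies $\dist_G(a_i,w_{ij})\le d$ and $\dist_G(b_j,w_{ij})\le d$, while for $i\ne i'$ no vertex of $B$ is within distance $d$ of both $a_i$ and $a_{i'}$, and symmetrically for the $b$'s. Discarding from $B$ all vertices other than the chosen witnesses, I may assume $B=\{w_{ij}:i,j\in[t]\}$ (this only removes edges from the Gaifman graph of the induced subsystem, so the $2t$ vertices still induce $K_{t,t}$). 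From this, $\dist_G(a_i,b_j)\le 2d$ for all $i,j$; moreover the $w_{ij}$ are pairwise distinct, since $w_{ij}=w_{i'j'}$ with $i\ne i'$ would put a vertex of $B$ within distance $d$ of both $a_i$ and $a_{i'}$, and with $i=i'$, $j\ne j'$ within distance $d$ of both $b_j$ and $b_{j'}$. Consequently $\Ball^G_d(a_i)\cap B=\{w_{ij}:j\in[t]\}$ and $\Ball^G_d(b_j)\cap B=\{w_{ij}:i\in[t]\}$, so these two families of ball-traces are transversal partitions of $B$, meeting pairwise exactly in $w_{ij}$. In short, $G$ contains a $t\times t$ ``ball grid'': the branch vertices $a_i$ and $b_j$ together with $t^2$ distinct connector vertices $w_{ij}$, where $a_i$ and $b_j$ are reachable from $w_{ij}$ along paths of length at most $d$ --- morally, a thickened, bounded-length subdivision of $K_{t,t}$.

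Second, I would convert this into a shallow minor. For each $i$, let $C_i$ be a BFS tree in $G$ rooted at $a_i$ reaching every $w_{ij}$; then $C_i$ is connected, of radius at most $d$, and $V(C_i)\cap B=\{w_{ij}:j\in[t]\}$ (because $C_i\subseteq\Ball^G_d(a_i)$). Define $D_j$ analogously from $b_j$ and $\{w_{ij}:i\in[t]\}$. Each $C_i$ meets each $D_j$ at $w_{ij}$, distinct $C_i$'s share no vertex of $B$, and likewise for the $D_j$'s. After reassigning each $w_{ij}$ to a single part and trimming so that the parts become pairwise disjoint, the $C_i$'s and $D_j$'s form a depth-$O(d)$ minor model of $K_{t,t}$ in $G$, the required edge between the $i$-th and $j$-th parts being provided by the short $a_i$--$w_{ij}$--$b_j$ route. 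Since $K_{t,t}$ has $K_t$ as a minor, this yields $K_t$ as a depth-$O(d)$ minor of $G$, so once $t$ exceeds the bound on $\omega_{O(d)}$ guaranteed by nowhere denseness we reach a contradiction. The delicate point --- which I expect to be the main obstacle --- is exactly making the parts disjoint: the trees $C_i$ may overlap outside $B$. To handle this I would use that $\Cc$ has bounded expansion beyond mere nowhere denseness, e.g. that its graphs have bounded weak colouring numbers $\wcol_{O(d)}$, or equivalently bounded $\nabla_{O(d)}$: a branching/pigeonhole argument on the bounded-radius BFS trees (or the finiteness of the bounded-depth topological grad applied to the thickened subdivided $K_{t,t}$ found above) lets one pass to a sub-grid of size still growing with $t$ on which the parts are genuinely disjoint. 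This forces $t$ to be bounded in terms of $\Cc$ and $d$, which is precisely \cref{cl:biclique-free}.
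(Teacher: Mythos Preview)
Your approach differs substantially from the paper's, and it has a genuine gap precisely at the step you yourself flag as ``the main obstacle''.

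The paper does not try to build a shallow minor at all. Instead it observes that the $t^2$ witnesses $w_{i,j}$ are pairwise distinct and satisfy
\[
\{z\in\{u_1,\dots,u_t,v_1,\dots,v_t\}:\dist_G(w_{i,j},z)\le d\}=\{u_i,v_j\}.
\]
Hence the transduction that marks $\{u_i,v_i:i\in[t]\}$ with a unary predicate $P$, interprets adjacency by $\exists z\,\big(P(z)\wedge\delta_d(x,z)\wedge\delta_d(y,z)\big)$, and restricts the output to $\{w_{i,j}:i,j\in[t]\}$ produces exactly the $t\times t$ rook graph. Since the class of rook graphs is not monadically dependent while every class of bounded expansion is, this yields the contradiction with no combinatorial disjointness argument whatsoever.

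In your route, by contrast, the phrase ``trimming so that the parts become pairwise disjoint'' hides the entire proof, and the gesture towards bounded $\wcol_{O(d)}$ or bounded $\nabla_{O(d)}$ does not resolve it. Nothing you have derived prevents the trees $C_i$ from sharing most of their vertices outside $B$: the non-adjacencies in the induced $K_{t,t}$ only say that $\Ball^G_d(a_i)\cap\Ball^G_d(a_{i'})$ misses $B$, not that it is empty, so many $a_i$ could sit inside a single radius-$d$ ball. What you have in hand is merely $t^2$ paths of length $\le 2d$ with prescribed endpoints and pairwise distinct midpoints; this is far from a $\le 2d$-subdivision of $K_{t,t}$, and the boundedness of $\nabla_{O(d)}$ says nothing about such a configuration directly. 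One can plausibly complete the argument via uniform quasi-wideness (pass to a large $2d$-scattered subset of $\{a_i,b_j\}$ modulo a bounded deletion set $Z$, then analyse how the short witness paths route through $Z$), but this is a nontrivial piece of work that your proposal does not supply. As written, the proof is incomplete at exactly the point that the paper's model-theoretic shortcut bypasses entirely.
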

	\begin{claimproof}
		Suppose, for contradiction, that for every $t\in \N$ there exists a graph in $\Dd$ that contains $K_{t,t}$ as an induced subgraph. By the characterization of the graphs from $\Dd$ explained above, this means that for every $t\in \N$, there is a graph $G\in \Cc$, vertex subsets $A,B\subseteq V(G)$, and distinct vertices $u_1,\ldots,u_t,v_1,\ldots,v_t\in A$, such that the following two conditions are satisfied:
		\begin{itemize}
			\item For every pair $(i,j)\in [t]\times[t]$, there exists a vertex $w_{i,j}\in B$ such that $\dist_G(u_i,w_{i,j})\leq d$ and $\dist_G(v_j,w_{i,j})\leq d$.
			\item For every pair $(i,j)\in [t]\times[t]$ with $i\neq j$, there is no vertex $w\in B$ that would satisfy $\dist_G(u_i,w_{i,j})\leq d$ and $\dist_G(u_j,w_{i,j})\leq d$, and there is no vertex $w\in B$ that would satisfy $\dist_G(v_i,w_{i,j})\leq d$ and $\dist_G(v_j,w_{i,j})\leq d$.
		\end{itemize}
		Note that the second point above implies that for each $(i,j)\in [t]\times [t]$, the only two vertices among $\{u_1,\ldots,u_t,v_1,\ldots,v_t\}$ that are at distance at most $d$ from $w_{i,j}$ are $u_i$ and $v_j$. In particular, the vertices $w_{i,j}$ for $(i,j)\in [t]\times [t]$ are pairwise different.
		
		Consider now a transduction that marks the set $\{u_i,v_i\colon i\in [t]\}$ using a unary predicate $P$, interprets a new edge relation using the formula $\varphi(x,y)=\exists z \ P(z)\wedge \delta_d(x,z)\wedge \delta_d(y,z)$, and outputs the subgraph induced by the vertices $\{w_{i,j}\colon (i,j)\in [t]\times [t]\}$. Note that in the output graph, vertices $w_{i,j}$ and $w_{i',j'}$ are adjacent if and only if $i=i'$ or $j=j'$, hence the output graph is the so-called $t\times t$ {\em{rook graph}}. Thus, applying the transduction described above to $\Cc$ allows us to conclude that the class of rook graphs is transducible from $\Cc$. However, it is known that the class of rook graphs is not monadically dependent, see e.g.~\cite[Lemma~3]{PilipczukSurvey}. This implies that $\Cc$ is not monadically dependent either, a contradiction with the assumption that $\Cc$ has bounded expansion.
	\end{claimproof}
	
	Since $\Cc$ has bounded expansion, from \cref{cl:trans} we infer that $\Dd$ has structurally bounded expansion. Together with \cref{cl:biclique-free}, this ensures that the assumptions of \cref{thm:sbe-subchromatic} are satisfied. So we conclude that $\Dd$ has bounded subchromatic number.
\end{proof}

We are left with proving \cref{thm:subchromatic}. The main idea is to use the characterization of graph classes of structurally bounded expansion through {\em{low shrubdepth colorings}}, due to Gajarsk\'y et al.~\cite{GajarskyKNMPST20}. This allows us to split the proof of \cref{thm:subchromatic} into a sequence of three steps, each achieving a greater generality. First we treat cographs of bounded depth, then classes of bounded shrubdepth, and finally classes of structurally bounded expansion.

\paragraph*{Cographs of bounded depth.} Recall that a {\em{cotree}} of a graph $G$ is a rooted tree $T$ satisfying the following properties:
\begin{itemize}
	\item The leaf set of $T$ coincides with the vertex set of $G$.
	\item The non-leaf nodes of $T$ are divided into two types: the {\em{join nodes}} and the {\em{union nodes}}. We require that $T$ encodes the adjacency relation of $G$ in the following way: For any two distinct vertices $u,v$ of $G$, $u$ and $v$ are adjacent in $G$ if and only if the lowest common ancestor in $T$ is a join node. 
\end{itemize}
In other words, if $x$ is a join node in $T$, then leaves from different subtrees rooted at the children of $x$ are adjacent; and if $x$ is a union node, then they are non-adjacent. The {\em{depth}} of a cotree $T$ is the maximum number of nodes on a root-to-leaf path in $T$. We say that $G$ is a {\em{cograph}} if $G$ admits a cotree, and $G$ is a cograph of {\em{depth}} at most $d$ if $G$ admits a cotree of depth at most $d$.

Clearly, cographs are closed under complementation (replacing all edges with non-edges and vice versa), because this corresponds to swapping join and union nodes in a cotree. Also, it is known that cographs are {\em{perfect}}: the chromatic number of a cograph is equal to the maximum size of a clique contained in it. From these two facts we can trivially derive the following.

\begin{observation}\label{obs:cograph-perfect}
	Let $G$ be a cograph that does not contain an independent set of size $k$. Then the vertex set of $G$ can be partitioned into at most $k-1$ cliques in $G$.
\end{observation}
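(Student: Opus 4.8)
The plan is to pass to the complement graph $\bar G$ and invoke perfectness. First I would observe that the hypothesis "$G$ has no independent set of size $k$" translates, under complementation, to "$\bar G$ has no clique of size $k$", i.e.\ the clique number of $\bar G$ is at most $k-1$. Next I would use the two facts recalled just above the statement: cographs are closed under complementation (swap join and union nodes in a cotree), so $\bar G$ is again a cograph; and cographs are perfect, so the chromatic number of $\bar G$ equals its clique number, hence is at most $k-1$.

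Finally, fix a proper coloring of $\bar G$ with at most $k-1$ colors. Each color class is an independent set in $\bar G$, which is precisely a clique in $G$, and the color classes partition $V(\bar G)=V(G)$. This is the desired partition into at most $k-1$ cliques of $G$. I do not expect any real obstacle: both auxiliary facts are classical and already stated in the text, so the argument is a one-line deduction — which is exactly why the text calls it trivial.
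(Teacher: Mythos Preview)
Your argument is correct and is exactly the derivation the paper intends: pass to the complement $\bar G$, use closure of cographs under complementation and perfectness to get $\chi(\bar G)=\omega(\bar G)\le k-1$, and read off the color classes as cliques of $G$. Nothing is missing.
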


Our first goal is to prove the conclusion of \cref{thm:sbe-subchromatic} for cographs of bounded depth.

\begin{lemma}\label{lem:cograph-subchromatic}
	Suppose $G$ is a cograph of depth at most $d$ such that $G$ does not contain $K_{t,t}$ as an induced subgraph, for some positive $t\in \N$. Then the subchromatic number of $G$ is at most $1+(d-1)(t-1)$.
\end{lemma}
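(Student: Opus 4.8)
The plan is to induct on the depth $d$. For the base case $d=1$, a cograph of depth at most $1$ has a single-node cotree and hence consists of a single vertex, so its subchromatic number is $1 = 1+(1-1)(t-1)$. For the inductive step I fix a cotree $T$ of $G$ of depth at most $d\ge 2$ with root $r$, and I may assume $r$ has at least two children (if $r$ has a unique child $c$, then the subtree at $c$ witnesses that $G$ is a cograph of depth at most $d-1$, and the induction hypothesis gives an even better bound). Let $c_1,\dots,c_m$ be the children of $r$, and let $G_i$ be the subgraph of $G$ induced by the leaves of the subtree rooted at $c_i$; that subtree is a cotree for $G_i$ of depth at most $d-1$, and since $G_i$ is an induced subgraph of $G$ it contains no induced $K_{t,t}$, so by the induction hypothesis $G_i$ has subchromatic number at most $1+(d-2)(t-1)$.

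If $r$ is a union node, then $G$ is the disjoint union of $G_1,\dots,G_m$: taking subcolorings of the $G_i$ over one common palette of size $1+(d-2)(t-1)$ yields a subcoloring of $G$ of the same size, because each color class is a disjoint union of cluster graphs, hence a cluster graph. This is within the claimed bound, so this case is easy.

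The substantive case is when $r$ is a join node, i.e.\ every vertex of $G_i$ is adjacent to every vertex of $G_j$ for $i\neq j$. The key point I would use is that at most one part $G_i$ can have independence number at least $t$: if $G_i$ and $G_j$ with $i\neq j$ both contained an independent set of size $t$, those two sets would together induce a $K_{t,t}$ in $G$. So, after reindexing, I may assume $\alpha(G_i)\le t-1$ for every $i\ge 2$. Set $H\coloneqq G[V(G_2)\cup\cdots\cup V(G_m)]$; this is again a cograph, and since it is the join of $G_2,\dots,G_m$, every independent set of $H$ lies inside a single $G_i$, so $\alpha(H)\le t-1$. By \cref{obs:cograph-perfect}, $V(H)$ partitions into at most $t-1$ cliques of $H$. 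I then color $G$ using two disjoint palettes: a subcoloring of $G_1$ with $1+(d-2)(t-1)$ colors from the induction hypothesis, and one fresh color for each of the at most $t-1$ cliques covering $H$. Every color class is either a cluster graph contained in $G_1$ or a clique contained in $H$, hence a cluster graph in $G$, so this is a valid subcoloring using $1+(d-2)(t-1)+(t-1)=1+(d-1)(t-1)$ colors.

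The one thing to be careful about — and what I would flag as the crux of the argument — is the join case, and specifically the need to peel off (at most) one independence-heavy part $G_1$, recurse on it alone, and cover the remaining joined parts by a bounded number of cliques drawn from a \emph{fresh} palette. Disjointness of the two palettes is essential: a color reused between $G_1$ and $H$ could produce a color class that is connected but not complete (two non-adjacent vertices of $G_1$, each joined to a common vertex of $H$), which would violate the cluster-graph condition. Everything else is routine bookkeeping about combining subcolorings across unions and joins.
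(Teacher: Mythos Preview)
Your proof is correct and follows essentially the same approach as the paper: induction on the depth, with the union case handled by reusing a common palette and the join case handled by observing that at most one child can have independence number at least $t$, recursing on that child, and covering the rest with $t-1$ cliques via \cref{obs:cograph-perfect} on a fresh palette. The only cosmetic difference is that the paper partitions each sibling $G_y$ separately into $t-1$ cliques and then takes the union $K^i=\bigcup_y K_y^i$ (which is a clique because $r$ is a join node), whereas you apply \cref{obs:cograph-perfect} once to the whole join $H$; both yield the same bound.
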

\begin{proof}
	We proceed by induction on the depth $d$. For $d=1$ the claim is trivial, for $G$ consists of one vertex.
	Suppose then that $d\geq 2$. Let then $T$ be a cotree of $G$ of depth at most $d$. For a node $x$ of $T$, by $V_x$ we denote the set of leaves of $T$ that are descendants of $x$ (where every node is considered a descendant of itself), and we write $G_x\coloneqq G[V_x]$. Further, let $r$ be the root of $T$. We distinguish two cases, depending on the type of $r$.
	
	If $r$ is a union node, then for every child $x$ of $r$, we may find by induction a subcoloring $\lambda_x$ of $G_x$ with $1+(d-2)(t-1)$ colors. Noting that $G$ is the disjoint union of graphs $G_x$ over the children $x$ of $r$, a subcoloring of $G$ can be constructed by taking the union of colorings $\lambda_x$, using for each of them the same palette of $1+(d-2)(t-1)\leq 1+(d-1)(t-1)$ colors.
	
	Suppose now that $r$ is a join node. We note the following:
	\begin{claim}
		There is at most one child $x$ of $r$ in $T$ such that $G_x$ contains an independent set of size $t$.
	\end{claim} 
	\begin{claimproof}
		If there are two distinct children $x,x'$ such that $G_x$ contains an independent set $I$ of size $t$ and $G_{x'}$ contains an independent set $I'$ of size $t$, then the set $I\cup I'$ induces a $K_{t,t}$ in $G$; a contradiction.
	\end{claimproof}
	
	Let then $x$ be the unique child of $r$ such that $G_x$ contains an independent set of size $t$; or we set $x$ to be any child of $r$, if there is no child satisfying this property. By induction, we may fix a subcoloring $\lambda_x$ of $G_x$ using at most $1+(d-2)(t-1)$ colors. Next, by \cref{obs:cograph-perfect}, for each child $y$ different from $x$ we may partition $V_y$ into cliques $K_y^1,\ldots,K_y^{t-1}$ (some possibly empty). For $i\in [t-1]$, define \[K^i\coloneqq \bigcup \{K^i_y\colon \textrm{$y$ is a child of $r$ different from $x$}\}.\]
	Note that each set $K^i$ is a clique in $G$, for $r$ is a join node. Therefore, the mapping $\lambda'\colon (V(G)\setminus V_x)\to [t-1]$ defined by assigning color $i$ to every vertex of $K^i$ is a subcoloring of $G-V_x$. We may now obtain a subcoloring of $G$ with at most $1+(d-2)(t-1)+(t-1)=1+(d-1)(t-1)$ colors by taking the union of subcolorings $\lambda_x$ and $\lambda'$, using disjoint palettes for them.
\end{proof}

\paragraph*{Classes of bounded shrubdepth.} Next, we treat classes of bounded shrubdepth. Shrubdepth is a graph parameter introduced by Ganian et al.~\cite{GanianHNOM19}. The underlying decomposition notion is called {\em{connection model}}, and intuitively, it generalizes the concept of a cograph of bounded depth by allowing a more elaborate mechanism for encoding the adjacency relation in a bounded-depth tree.

\begin{definition}
	Let $G$ be a graph. A {\em{connection model}} for $G$ consists of a finite set of labels $\Lambda$, a labelling $\lambda\colon V(G)\to \Lambda$, a rooted tree $T$ whose leaf set coincides with the vertex set of $G$, and, for every non-leaf node $x$ of $T$, a symmetric relation $M_x\subseteq \Lambda\times \Lambda$. We require that for any two distinct vertices $u,v$, we have
	\[uv\textrm{ are adjacent in }G\qquad\textrm{if and only if}\qquad (\lambda(u),\lambda(v))\in M_x,\]
	where $x$ is the lowest common ancestor of $u$ and $v$ in $T$. We say that a class of graphs $\Dd$ has {\em{bounded shrubdepth}} if there exists $d,m\in \N$ such that every member of $\Dd$ admits a connection model of depth at most $d$ and using a label set of size $m$. As with cographs, the {\em{depth}} of a connection model is the maximum number of nodes on a root-to-leaf path in $T$.
\end{definition}

Note the following.

\begin{observation}\label{obs:shb-cograph}
	Let $(\Lambda,\lambda,T,\{M_x\}_{x\in V(T)\setminus V(G)})$ be a connection model of depth at most $d$ of a graph~$G$. Then for every $i\in \Lambda$, the graph $G[\lambda^{-1}(i)]$ is a cograph of depth at most $d$.
\end{observation}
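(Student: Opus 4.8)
The plan is to extract a cotree for $G[\lambda^{-1}(i)]$ directly from the connection model. Fix a label $i\in\Lambda$ and write $U_i\coloneqq\lambda^{-1}(i)$. Let $T_i$ be the minimal subtree of $T$ spanning the leaves in $U_i$; concretely, $T_i$ consists of all nodes of $T$ that are ancestors (including themselves) of some leaf from $U_i$, with the inherited ancestor relation. Some nodes of $T_i$ may now have exactly one child; obtain $T_i'$ from $T_i$ by repeatedly suppressing such nodes (contracting each unary node with its unique child). The leaf set of $T_i'$ is exactly $U_i$, and the depth of $T_i'$ is at most the depth of $T_i$, which is at most the depth of $T$, hence at most $d$. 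Note that every internal node of $T_i'$ is a non-leaf node of $T$, so the relation $M_x$ is defined for it.

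Next I would declare every internal node $x$ of $T_i'$ to be a \emph{join node} if $(i,i)\in M_x$, and a \emph{union node} otherwise. To see that $T_i'$ with this labelling is a cotree of $G[U_i]$, take two distinct vertices $u,v\in U_i$ and let $x$ be their lowest common ancestor in $T_i'$. One checks that $x$ is also the lowest common ancestor of $u$ and $v$ in $T$: spanning subtrees preserve lowest common ancestors of retained leaves, and a suppressed (unary) node can never be the lowest common ancestor of two distinct leaves, since such an ancestor must have at least two children through which the two leaves descend. By the defining property of the connection model, $u$ and $v$ are adjacent in $G$ if and only if $(\lambda(u),\lambda(v))=(i,i)\in M_x$, i.e.\ if and only if $x$ is a join node. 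This is exactly the cotree condition, so $G[U_i]$ is a cograph of depth at most $d$.

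The argument is essentially bookkeeping, so there is no real obstacle; the only point that needs a little care is the claim that suppressing unary nodes does not disturb lowest common ancestors of pairs of retained leaves, which is handled by the observation above.
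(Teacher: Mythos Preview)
Your proof is correct and follows essentially the same approach as the paper: restrict $T$ to the nodes with a descendant leaf of label $i$, and declare each remaining internal node $x$ a join or union node according to whether $(i,i)\in M_x$. The only difference is that you additionally suppress unary nodes, which is harmless but unnecessary under the paper's definition of a cotree (internal nodes are not required to have at least two children), and you spell out the lowest-common-ancestor verification that the paper leaves implicit.
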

\begin{proof}
	A cotree of depth at $d$ of $G[\lambda^{-1}(i)]$ can be constructed by taking $T$, removing all the nodes which do not have a descendant leaf with label $i$, and making every remaining non-leaf node $x$ a join node if $(i,i)\in M_x$, and a union node if $(i,i)\notin M_x$.
\end{proof}

This allows us to easily lift the conclusion of \cref{lem:cograph-subchromatic} to the setting of classes of bounded shrubdepth.

\begin{lemma}\label{lem:shrub-subchromatic}
	Let $t\in \N$ and $\Dd$ be a graph class of  bounded shrubdepth such that no member of $\Dd$ contains $K_{t,t}$ as an induced subgraph. Then $\Dd$ has bounded subchromatic number.
\end{lemma}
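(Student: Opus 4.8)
The plan is to combine \cref{lem:cograph-subchromatic} with \cref{obs:shb-cograph} through a straightforward product-coloring argument. Fix $d,m\in\N$ witnessing that $\Dd$ has bounded shrubdepth, so every $G\in\Dd$ admits a connection model $(\Lambda,\lambda,T,\{M_x\}_{x\in V(T)\setminus V(G)})$ of depth at most $d$ with $|\Lambda|\le m$. First I would invoke \cref{obs:shb-cograph} to conclude that for each label $i\in\Lambda$, the graph $G[\lambda^{-1}(i)]$ is a cograph of depth at most $d$; moreover, being an induced subgraph of $G$, it contains no induced $K_{t,t}$. Hence \cref{lem:cograph-subchromatic} applies and yields a subcoloring $\lambda_i$ of $G[\lambda^{-1}(i)]$ using at most $K\coloneqq 1+(d-1)(t-1)$ colors.

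Next I would stitch these together into a single coloring $\mu$ of $G$ by setting $\mu(v)\coloneqq(\lambda(v),\lambda_{\lambda(v)}(v))$ for every $v\in V(G)$. This uses at most $mK$ colors, a bound depending only on $d,m,t$ and not on $G$. It then remains to check that $\mu$ is a subcoloring of $G$. Fix a color $(i,c)$ appearing in the image of $\mu$. The set $\mu^{-1}((i,c))$ equals $\lambda_i^{-1}(c)$, which is contained in $\lambda^{-1}(i)$; consequently $G[\mu^{-1}((i,c))]$ coincides with the subgraph of $G[\lambda^{-1}(i)]$ induced by $\lambda_i^{-1}(c)$, and since $\lambda_i$ is a subcoloring of $G[\lambda^{-1}(i)]$, this induced subgraph is a cluster graph. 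Thus every color class of $\mu$ induces a cluster graph, so $\mu$ is a subcoloring of $G$ with at most $mK$ colors, and $\Dd$ has bounded subchromatic number.

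I do not expect a genuine obstacle here: essentially all of the work is carried by \cref{lem:cograph-subchromatic}, and the only point requiring verification is that the product coloring preserves the subcoloring property. This is immediate because every color class of $\mu$ lies entirely within a single label class $\lambda^{-1}(i)$, so whether it induces a cluster graph is a property internal to the single cograph $G[\lambda^{-1}(i)]$, which was already handled. The overall bound on the subchromatic number obtained this way is $m\cdot(1+(d-1)(t-1))$.
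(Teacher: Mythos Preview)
Your proposal is correct and follows essentially the same argument as the paper: partition $V(G)$ into the label classes, apply \cref{obs:shb-cograph} and \cref{lem:cograph-subchromatic} to each, and combine the resulting subcolorings using disjoint palettes (your product coloring $\mu$). You even obtain the same bound $m\cdot(1+(d-1)(t-1))$, and you make explicit the point---implicit in the paper---that each $G[\lambda^{-1}(i)]$ is $K_{t,t}$-free as an induced subgraph of $G$.
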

\begin{proof}
	By definition, there are $d,m\in \Dd$ such that every member of $\Dd$ admits a connection model of depth at most $d$ and using $m$ labels. By \cref{obs:shb-cograph}, we conclude that for every $G\in \Dd$, the vertex set of $G$ can be partitioned into at most $m$ subsets, each inducing a cograph of depth at most $d$. We may now apply \cref{lem:cograph-subchromatic} to obtain a subcoloring of each of these cographs. Assuming the obtained subcolorings use disjoint palettes, we may construct a subcoloring of $G$ by taking their union. This subcoloring uses at most $m\cdot (1+(d-1)(t-1))$ colors, which is a constant depending only on $\Dd$ and $t$.
\end{proof}

\paragraph*{Classes of structurally bounded expansion.}

We will use the following result of Gajarsk\'y et al.~\cite{GajarskyKNMPST20} that classes of structurally bounded expansion admit {\em{low shrubdepth colorings}}.

\begin{theorem}[\cite{GajarskyKNMPST20}]\label{thm:sbe}
	Let $\Dd$ be a graph class of structurally bounded expansion. Then for every $p\in \N$, there is a finite palette of colors $C_p$ and a graph class $\Ee_p$ of bounded shrubdepth such that the following holds: For every graph $G\in \Cc$, there is a coloring $\alpha\colon V(G)\to C_p$ such that for every subset of colors $I\subseteq C_p$ with $|I|\leq p$, we have $G[\alpha^{-1}(I)]\in \Ee_p$.
\end{theorem}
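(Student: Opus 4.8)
The plan is to reduce the existence of low shrubdepth colorings for a structurally bounded expansion class $\Dd$ to two external facts: the existence of \emph{low treedepth colorings} for the bounded expansion class from which $\Dd$ is transduced \cite{sparsity}, and the characterization of bounded shrubdepth as the transduction closure of bounded treedepth \cite{GanianHNOM19}. By the definition of structural boundedness, there is a bounded expansion class $\Cc$ and a transduction $\Tf$ with $\Dd \subseteq \Tf(\Cc)$. Unfolding the definition, every $G \in \Dd$ can be written as $G = H[A]$, where $H$ arises from some $G_0 \in \Cc$ by fixing an interpretation of the finite predicate set of $\Tf$, yielding a colored graph $G_0^+$, and defining the edges of $H$ through a fixed symmetric formula $\varphi(x,y)$ evaluated in $G_0^+$; here $A \subseteq V(G_0)$. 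Fixing $p$, the task reduces to constructing, with a bounded number of colors, a coloring $\alpha$ of each such $G$ so that the union of any $p$ of its color classes induces a graph that is a \emph{fixed} transduction of a graph of bounded treedepth; the shrubdepth characterization then places all these induced subgraphs into a single class $\Ee_p$ of bounded shrubdepth, as required.

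First I would apply the low treedepth coloring theorem to $\Cc$: for a parameter $q$ to be fixed later, there is $N_q \in \N$ and, for each $G_0 \in \Cc$, a coloring $\gamma \colon V(G_0) \to [N_q]$ such that the union of any $q$ of its color classes induces in $G_0$ a subgraph of treedepth at most some bound $t_q$. Next, since $\varphi$ has bounded quantifier rank, Gaifman's locality theorem lets me assume that $\varphi(x,y)$ is a Boolean combination of formulas local around $\{x,y\}$ of some radius $r$ together with basic local sentences; the latter evaluate to fixed truth values on each $G_0^+$, so after partitioning $\Dd$ into finitely many subclasses, one per truth assignment, I may treat them as constants. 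I would then refine $\gamma$ by recording, for every vertex, its $r$-local type up to a suitable bounded quantifier rank; since there are only finitely many such types, this costs only a bounded factor in the palette. The coloring $\alpha$ is the restriction of this refined coloring to $A = V(G)$.

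The hard part will be locality: the formula $\varphi$, although local, still reads the radius-$r$ neighborhoods and distances of vertices in $G_0$, and these may run through vertices outside the chosen color classes. Hence the subgraph of $G$ induced by a selection $S$ of classes is \emph{not} a priori recoverable from the bounded-treedepth graph $G_0[S']$ alone, where $S' \subseteq V(G_0)$ is the corresponding vertex set, because a path witnessing proximity of two vertices of $S'$ may leave $S'$. To overcome this I would strengthen the choice of $\gamma$ so that it simultaneously behaves like a $p$-centered coloring and ``closes off'' the radius-$r$ balls around the selected classes: using that in a bounded expansion class the radius-$r$ balls, and more generally the subgraphs captured by few color classes, have bounded treedepth, one can arrange $q$ as a function of $p$ and $r$ so that the relevant neighborhoods of $S'$ already lie within the union of at most $q$ classes, hence inside a bounded-treedepth induced subgraph. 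With the $r$-local information frozen into the colors and the distances confined to this bounded-treedepth host, the edges of $G[S]$ become expressible as a single transduction applied to a graph of treedepth at most $t_q$.

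Finally, I would invoke the characterization of bounded shrubdepth \cite{GanianHNOM19}: a class obtained by a fixed transduction from a class of bounded treedepth has bounded shrubdepth. Applying it to the transduction built in the previous step yields a single class $\Ee_p$ of bounded shrubdepth containing $G[\alpha^{-1}(I)]$ for every $G \in \Dd$ and every $I \subseteq C_p$ with $|I| \le p$, which is exactly the desired conclusion. The two points requiring the most care are the neighborhood-closure argument that tames the locality of $\varphi$, and the verification that freezing the $r$-local types into a bounded palette is compatible with keeping the treedepth of the host graph bounded; both rely crucially on the \emph{sparsity} of the base class $\Cc$, and not merely on its being monadically dependent.
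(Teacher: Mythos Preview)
The paper does not prove \cref{thm:sbe}; it is quoted verbatim as a result of Gajarsk\'y et al.~\cite{GajarskyKNMPST20} and used as a black box in the proof of \cref{thm:sbe-subchromatic}. There is therefore no proof in the paper to compare your proposal against.

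That said, your sketch does follow the broad architecture of the argument in~\cite{GajarskyKNMPST20}: pull back along the transduction to a bounded expansion class, invoke low treedepth colorings there, and then push forward using that bounded shrubdepth is the transduction closure of bounded treedepth. You have also correctly located the real difficulty, namely that the formula $\varphi(x,y)$ defining the edges of $G$ is evaluated in all of $G_0^+$, so restricting to a few color classes in $G_0$ does not a priori let you recover $G$ restricted to those classes. Your proposed fix, choosing $q$ so that the radius-$r$ neighborhoods of any $p$ selected classes are absorbed into at most $q$ classes, is the step that needs the most work: a low treedepth coloring by itself gives no such closure property, and the ``neighborhood-closure'' assertion as stated is not a standard lemma. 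The actual proof in~\cite{GajarskyKNMPST20} handles this via the quantifier-elimination machinery available over bounded expansion classes (expanding the signature by bounded-range unary functions so that $\varphi$ becomes essentially quantifier-free), which sidesteps the locality issue rather than confronting it through neighborhood closure. Your outline is not wrong in spirit, but this step would need to be replaced or substantially tightened to become a proof.
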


Now, \cref{thm:sbe} allows us to easily lift the conclusion of \cref{lem:shrub-subchromatic} to the setting of classes of structurally bounded expansion. Thus we finish the proof of \cref{thm:sbe-subchromatic}, restated below for convenience.

\sbe*
\begin{proof}
	Consider any $G\in \Dd$.
	By \cref{thm:sbe} applied to $\Dd$ and $p=1$, we find a coloring $\alpha\colon V(G)\to C$ such that each color in $\alpha$ induces a graph belonging to $\Ee$, where $C\coloneqq C_1$ and $\Ee\coloneqq \Ee_1$ is the finite palette and the class of bounded shrubdepth provided by \cref{thm:sbe}. Further, by \cref{lem:shrub-subchromatic}, for each color $i\in C$ we may find a subcoloring $\lambda_i$ of $G[\alpha^{-1}(i)]$ using at most $q$ colors, where $q$ is a constant depending only on $\Ee$ and $t$. Assuming that each coloring $\lambda_i$ uses a disjoint palette of $q$ colors, we find that $\bigcup_{i\in C} \lambda_i$ is a subcoloring of $G$ using at most $|C|\cdot q$ colors. This proves that the subchromatic number of every graph in $\Dd$ is bounded by $|C|\cdot q$, which is a constant depending only on $\Dd$ and $t$.
\end{proof}

\section{Main proof}\label{sec:proof}

In this section, we prove \cref{thm:main-set-system} and then derive \cref{thm:main} from it. The idea for the proof of \cref{thm:main-set-system} is to apply induction on the semi-ladder index. The main push is encapsulated in the following technical lemma.

\begin{lemma}\label{lem:step}
	Suppose $\Ss=(U,\Ff)$ is a set system satisfying the following conditions:
	\begin{itemize}
		\item The 2VC dimension of $\Ss$ is bounded by a constant $h\in \N$.
		\item The subchromatic number of $\Ss$ is bounded by a constant $s\in \N$.
		\item We have $U\notin \Ff$; that is, $\Ff$ does not contain the set consisting of all the elements of the universe.
	\end{itemize}
	Then there is a constant $k\in \N$, depending only on $h$ and $s$, and a subcoloring $\lambda$ of $\Gaif(\Ss)$ with at most $k$ colors so that the following condition is satisfied: for every color $i$ and connected component $X$ of $\Gaif(\Ss)[\lambda^{-1}(i)]$, the semi-ladder index of the set system $\Ss[X,\Ff]$ is strictly smaller than that of $\Ss$.
\end{lemma}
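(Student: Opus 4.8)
The plan is to use the semi-ladder index as the quantity that we want to decrease, and to exploit the two structural assumptions (bounded 2VC dimension and bounded subchromatic number) together with the duality theorem of Ding, Seymour, and Winkler (\cref{thm:ding}) to engineer a subcoloring whose color classes induce, inside $\Gaif(\Ss)$, connected components that all miss a ``dominating'' set in the relevant sense. The key observation is the following: a semi-ladder $u_1,\dots,u_\ell,F_1,\dots,F_\ell$ of order $\ell$ in $\Ss[X,\Ff]$ has, in particular, $u_\ell\notin F_\ell$ but $u_i\in F_\ell$ for all $i<\ell$; so if we could guarantee that for every $F\in\Ff$ with $F\cap X\ne\emptyset$ there is some element of $X$ that lies in $F$ but still the set $F\cap X$ is ``not too big'' relative to $X$ — more precisely, if we guarantee that no $F\cap X$ can serve as the top set of a long semi-ladder — then the index drops. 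The cleanest way to force this is to ensure that each component $X$ of a color class is small enough, or is ``hit'' by the sets in a controlled way, so that a semi-ladder of the maximum order $\ell$ of $\Ss$ cannot be realized inside $X$.

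Concretely, I would argue as follows. Let $\ell=\bn$-like-notation be the semi-ladder index of $\Ss$; say it equals $\ell$. First, discard from $\Ff$ every set $F$ with $|F|<\ell$ — sets that small cannot be the last set $F_\ell$ of a semi-ladder of order $\ell$ (they contain fewer than $\ell-1$ of the $u_i$'s, hence fewer than $\ell$ elements of $X$ would be needed anyway); more carefully, such small sets cannot extend any semi-ladder to order $\ell$, so restricting attention to the large sets does not change which subsystems have semi-ladder index $\ell$. Now consider the set system $\Ss'=(U,\Ff')$ where $\Ff'$ consists of the large sets, and look at its Gaifman graph restricted to a potential color class. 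The point of the subchromatic assumption is that we can partition $U$ into $s$ parts, each inducing a cluster graph in $\Gaif(\Ss)$; within a single clique of such a cluster graph every set of $\Ff$ behaves very rigidly. The point of the 2VC/Ding–Seymour–Winkler assumption is that whenever the Gaifman graph of an induced subsystem has no large independent set, a bounded number of its sets cover the whole universe, which prevents long ladders; combined with the Ramsey dichotomy between comatchings and ladders mentioned after the definition, this controls the semi-ladder index. So the subcoloring $\lambda$ I would build is: first apply a subcoloring into $s$ parts, then within each part refine using the covering family produced by \cref{thm:ding} (its size is a bounded function $f(h,k)$ of $h$ and of the independence number bound $k$, where $k$ will itself be controlled by the semi-ladder/ladder index via a Ramsey argument), assigning to each element a color recording which small-index ``layer'' it falls into; the total number of colors is then a function of $h$ and $s$ only.

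The verification that the semi-ladder index strictly drops in each monochromatic component $X$ is where the real content lies, and I expect this to be the main obstacle. One wants to take a hypothetical semi-ladder of order $\ell$ (the old index) living inside $\Ss[X,\Ff]$ and derive a contradiction with the way $X$ was carved out: either $X$ lies inside a single clique of the cluster graph, in which case the membership pattern of a semi-ladder (some $u_i\in F_j$, some $u_i\notin F_i$) is incompatible with all sets trimmed to $X$ being ``clique-like,'' or $X$ was chosen as a piece on which a bounded covering family exists, in which case one of the covering sets must contain the offending element $u_\ell$, contradicting $u_\ell\notin F_\ell$ after tracing through the duality. I would organize this as: (i) handle the ladder case — long ladders are killed because the tails $F_j\setminus\{u_1,\dots,u_{j}\}$ would force a large independent set in the Gaifman graph, contradicting the bounded covering number; (ii) handle the comatching case — here the subchromatic/cluster-graph structure is what rules out long comatchings inside a component, since a comatching of order $3$ already forces a non-clique, non-edgeless pattern; (iii) combine via Ramsey. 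The delicate bookkeeping is making sure the color budget $k$ depends on $h$ and $s$ alone and not on $\ell$, which forces the covering-family applications to be to subsystems whose independence number is bounded by a function of $h$ and $s$ rather than of $\ell$ — achieved by first quotienting by the $s$-part cluster structure so that the relevant independent sets live inside unions of a bounded number of cliques.
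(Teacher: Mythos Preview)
Your proposal has the right high-level shape --- subcolor into $s$ parts, then refine each cluster using \cref{thm:ding} --- but the verification you sketch does not go through, and you miss the short idea that makes the argument work and keeps $k$ independent of~$\ell$.

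First, your claim (ii) is simply false: a comatching of any order lives happily inside a single clique of $\Gaif(\Ss)$. If $u_1,\dots,u_\ell,F_1,\dots,F_\ell$ is a comatching with $\ell\geq 3$, then for any $i\neq j$ pick $k\notin\{i,j\}$ and observe $u_i,u_j\in F_k$, so $u_iu_j$ is an edge of $\Gaif(\Ss)$. The cluster structure therefore does nothing to rule out long comatchings, and your Ramsey dichotomy collapses. Your claim (i) is also only a heuristic: nothing in your setup forces the ``tails'' of a ladder to be independent in the Gaifman graph.

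Second --- and this is the real gap --- you never invoke the hypothesis $U\notin\Ff$, which is exactly where the semi-ladder drop comes from. The paper's argument is this. Inside each cluster $Y$ (a clique in $\Gaif(\Ss)$), the Gaifman graph of $\Ss[Y,\Ff]$ has no independent set of size $2$, so \cref{thm:ding} applies with independence bound $k=1$ and yields a cover $\Qq_Y\subseteq\Ff$ of size at most $m\coloneqq f(h,1)$; note this depends only on $h$, dissolving your worry about $\ell$-dependence without any quotienting. Partition $Y$ into at most $m$ pieces $Y^1,\dots,Y^{m_Y}$, each contained in some $F\in\Ff$. Now the drop is a one-liner: since $U\notin\Ff$ we have $F\neq U$, so pick $u\in U\setminus F$; any semi-ladder in $\Ss[Y^j,\Ff]$ extends by appending the pair $(u,F)$ at the end to a strictly longer semi-ladder in $\Ss$. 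No Ramsey, no ladder/comatching split. The final subcoloring $\lambda(v)=(\kappa(v),j)$ uses $k=s\cdot m$ colors.
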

\begin{proof}
	Denote $G\coloneqq \Gaif(\Ss)$. Since $\Ss$ has subchromatic number at most $s$, we may find a subcoloring $\kappa\colon U\to [s]$ of $G$. Recall that for each $i\in [s]$, the graph $G[\kappa^{-1}(i)]$ is a cluster graph, that is, a disjoint union of cliques. We will call those cliques {\em{clusters}} of colors $i$; and a {\em{cluster}} is a cluster of color $i$, for some $i\in [s]$. Thus, the vertex set of $G$ is partitioned into clusters, each being a clique in $G$, and distinct clusters of the same color are not adjacent. We let $\Yy$ be the set of clusters. 
	
	Consider any cluster $Y\in \Yy$. Note that $\Gaif(\Ss[Y,\Ff])$ is the complete graph on vertex set $Y$, hence it does not contain any independent set of size larger than $1$. Further, as $\Ss[Y,\Ff]$ is an induced subsystem of $\Ss$, the 2VC dimension of $\Ss[Y,\Ff]$ is at most $h$. Therefore, from \cref{thm:ding} we infer that there exists $\Qq_Y\subseteq \Ff$ such that $\bigcup \Qq_Y\supseteq Y$ and $|\Qq_Y|\leq f(h,1)$, where $f$ is the function provided by \cref{thm:ding}. Therefore, denoting $m\coloneqq f(h,1)$, we may partition $Y$ into subsets $Y^1,\ldots,Y^{m_Y}$ for some $m_Y\leq m$ so that each subset $Y^j$, for $j\in [m_Y]$, is contained in some $F\in \Ff$.
	
	The following claim is the key step of the whole proof.
	
	\begin{claim}\label{cl:semi-ladder-drop}
		For each $Y\in \Yy$ and $j\in [m_Y]$, the semi-ladder index of the set system $\Ss[Y^j,\Ff]$ is strictly smaller than that of~$\Ss$.
	\end{claim}
	\begin{claimproof}
		As we argued, there exists $F\in \Ff$ such that $Y^j\subseteq F$. Since $U\notin \Ff$, we have $F\neq U$ and therefore there exists $u\in U$ such that $u\notin F$. Observe now that every semi-ladder in $\Ss[Y^j,\Ff]$, say of order $\ell$, can be extended by the pair $(u,F)$ to a semi-ladder in $\Ss$ of order $\ell+1$. It follows that the semi-ladder index of $\Ss[Y^j,\Ff]$ is strictly smaller than the semi-ladder index of $\Ss$.
	\end{claimproof}
	
	Define now a mapping $\lambda\colon U\to [s]\times [m]$ by setting 
	\[\lambda(u)\coloneqq (\kappa(u),j),\]
	where $j\in [m]$ is such that $u\in Y^j$ for $Y\in \Yy$ that contains $u$. By construction, the connected components of the subgraphs induced by the colors of $\lambda$ are exactly the sets $Y^j$, for $Y\in \Yy$ and $j\leq m_Y$. Clearly, each of them is a clique in $G$, hence $\lambda$ is a subcoloring of $G$ with $k\coloneqq sm$ colors. And \cref{cl:semi-ladder-drop} implies that each such component induces a subsystem of strictly smaller semi-ladder index, as required.
\end{proof}

We are ready to complete the proof of \cref{thm:main-set-system}. We recall it for convenience.

\setSystem*

\begin{proof}
	We fix $h$ and $s$ for the remainder of the proof and proceed by induction on the semi-ladder index~$\ell$. Let $\Ss=(U,\Ff)$. Our goal is to prove a slightly stronger statement: we will construct a strong odd coloring $\sigma\colon U\to [f(\ell,h,s)]$ such that also in the whole universe $U$, every color appears zero times or an odd number of times. A coloring having this property will be called a {\em{totally strong odd coloring}}. The number of colors $f(\ell,h,s)$ will be governed by the following recursion:
	\begin{eqnarray*}
		f(0,h,s) & \coloneqq & 2,\\
		f(\ell,h,s) & \coloneqq & k\cdot f(\ell-1,h,s)\cdot 2^{f(\ell-1,h,s)+1}\qquad\textrm{for $\ell\geq 1$,}
	\end{eqnarray*}
	where $k$ is the constant provided by \cref{lem:step} for $h$ and $s$. For convenience, denote $p_\ell\coloneqq f(\ell,h,s)$.
	
	Consider first the base case $\ell=0$. That $\Ss$ has semi-ladder index $0$ means that $\Ff=\emptyset$ or $\Ff=\{U\}$. In any case, a totally strong odd coloring of $\Ss$ with two colors can be obtained by either assigning all the elements of $U$ the same color, provided $|U|$ is odd, or arbitrarily splitting $U$ into two colors of odd cardinality, provided $|U|$ is even.  
	
	We proceed to the induction step. First, we may assume that $U\notin \Ff$. Indeed, since we anyway aim at constructing a totally strong odd coloring, constructing it for the subsystem obtained by removing $U$ from $\Ff$ will automatically satisfy also the requirement imposed by $U$. Note here that removing $U$ from $\Ff$ means passing to an induced subsystem, in which all the relevant parameters --- the semi-ladder index, the 2VC dimension, and the maximum subchromatic number among induced subsystems --- are still bounded by $\ell,h,s$, respectively.
	
	Having assumed that $U\notin \Ff$, we may apply \cref{lem:step} to construct a subcoloring $\lambda\colon U\to [k]$ of $G\coloneqq \Gaif(\Ss)$ with the following property: for every color $i\in [k]$ and a connected component $X$ of $G[\lambda^{-1}(i)]$, the induced subsystem $\Ss[X,\Ff]$ has semi-ladder index at most $\ell-1$. Similarly to the proof of \cref{lem:step}, such connected components will be called {\em{clusters}}, and a cluster of color $i$ is one whose elements are colored $i$ in $\lambda$. The set of all clusters will be denoted $\Xx$; this is a partition of $U$. 
	
	By induction, for every cluster $X\in \Xx$ we may find a totally strong odd coloring $\sigma_X\colon X\to [p_{\ell-1}]$ of $\Ss[X,\Ff]$. We now classify clusters $X$ according to their color in $\lambda$ and the set of colors present within them in $\sigma_X$. That is, for $i\in [k]$ and $T\subseteq [p_{\ell-1}]$, we let $\Xx^{i,T}$ be the set of all clusters $X\in \Xx$ such that
	\begin{itemize}
		\item $X$ is of color $i$; and
		\item $\sigma_X(X)=T$.
	\end{itemize}
	Consider any $X\in \Xx^{i,T}$ and
	note that since $\sigma_X$ is totally strong odd, for each color $j\in [p_{\ell-1}]$, the number of elements of $X$ colored $j$ under $\sigma_X$ is zero if $j\notin T$, and odd if $j\in T$.
	
	Now, for each $i\in [k]$ and $T\subseteq [p_{\ell-1}]$, define a partition $\{\Xx^{i,T}_1,\Xx^{i,T}_2\}$ of $\Xx^{i,T}$ as follows:
	\begin{itemize}
		\item If $|\Xx^{i,T}|$ is odd, then $\Xx^{i,T}_1\coloneqq \Xx^{i,T}$ and $\Xx^{i,T}_2\coloneqq \emptyset$.
		\item Otherwise, we let $\{\Xx^{i,T}_1,\Xx^{i,T}_2\}$ be an arbitrary partition of $\Xx^{i,T}$ into two subsets of odd size.
	\end{itemize}
	Denote $P\coloneqq [k]\times 2^{[p_{\ell-1}]}\times [2]$ for convenience.

	With all the necessary objects defined, we may construct the sought coloring $\sigma\colon U\to [p_{\ell-1}]\times P$ as follows: for every $u\in U$, say belonging to a cluster $X\in \Xx^{i,T}_a$ for some $(i,T,a)\in P$, we set
	\[\sigma(u)\coloneqq (\sigma_X(u),i,T,a).\]
	Clearly, $\sigma$ uses at most $k\cdot p_{\ell-1}\cdot 2^{p_{\ell-1}+1}=p_\ell$ colors. So it remains to verify that $\sigma$ is a totally strong odd coloring of $\Ss$.
	
	Consider any $F\in \Ff$. Since the elements of the sets $\bigcup \Xx^{i,X}_a$ for different tuples $(i,T,a)\in P$ are colored with disjoint palettes, it suffices to show that for each tuple $(i,T,a)\in P$, every color appears in $F\cap \bigcup \Xx^{i,X}_a$ either zero times or an odd number of times. As $\bigcup \Xx^{i,X}_a\subseteq \lambda^{-1}(i)$, we have that $G[\bigcup \Xx^{i,X}_a]$ is an induced subgraph of $G[\lambda^{-1}(i)]$, and hence it is a cluster graph. Note that $F\cap \bigcup \Xx^{i,X}_a$ is a clique in $G[\bigcup \Xx^{i,X}_a]$, by the definition of the Gaifman graph. It follows that $F\cap \bigcup \Xx^{i,X}_a$ is entirely contained in a single cluster of $\Xx^{i,X}_a$, or in other words, there exists $X\in \Xx^{i,X}_a$ such that $F\cap \bigcup \Xx^{i,X}_a=F\cap X$. It now follows from $\sigma_X$ being a strong odd coloring of $\Ss[X,\Ff]$ that every color appears in $F\cap X$ either zero times or an odd number of times.
	
	We are left with checking that in total in the whole $U$, every color $(j,i,T,a)\in [p_{\ell-1}]\times P$ appears either zero times or an odd number of times. By construction, this color does not appear at all if $j\notin T$, and otherwise it appears exactly in the clusters belonging to $\Xx^{i,T}_a$, and in each of them it is featured an odd number of times. Since $|\Xx^{i,T}_a|$ is either zero or odd by construction, it follows that $(j,i,T,a)$ appears either zero times or an odd number of times in the whole $U$.
\end{proof}

We conclude by deriving \cref{thm:main} from all the tools we have established.

\main*
\begin{proof}
	Let $\Dd$ be the hereditary closure of the class of set systems $\Balls_d(\Cc)$. By \cref{lem:sl-bounded,lem:2VC-bounded,thm:subchromatic}, there are $\ell,h,s\in \N$ depending only on $d$ and $\Cc$ such that $\Dd$ has semi-ladder index at most $\ell$, 2VC dimension at most $h$, and subchromatic number at most $s$. Further, since $\Cc$ has bounded expansion, it also has bounded chromatic number, say by $b\in \N$.
	
	Consider any graph $G\in \Cc$ and let $\Ss=(V(G),\Ff)\coloneqq \Balls_d(G)$. Let $\alpha\colon V(G)\to [b]$ be a proper coloring of $G$ with $b$ colors. For each color $i\in [b]$, let $\Ss_i\coloneqq \Ss[\alpha^{-1}(i),\Ff]$. Noting that $\Ss_i\in \Dd$, we may apply to \cref{thm:main-set-system} to obtain a strong odd coloring $\sigma_i\colon \alpha^{-1}(i)\to [k]$ of $\Ss_i$, where $k\coloneqq f(\ell,h,s)$ and $f$ is the function provided by \cref{thm:main-set-system}; thus $k$ is a constant depending only on $\Cc$ and $d$. We now define a coloring $\phi\colon V(G)\to [b]\times [k]$ with $c\coloneqq bk$ colors by setting
	\[\phi(u)\coloneqq (\alpha(u),\sigma_{\alpha(u)}(u)).\]
	Clearly $\phi$ is proper, because it is a refinement of the proper coloring $\alpha$. Also, that each $\sigma_i$ is a strong odd coloring of $\Ss_i$ implies that $\phi$ is a strong odd coloring of $\Ss$. So $\phi$ satisfies all the requested properties.
\end{proof}

\section{Conclusions}

In this paper we argued that every graph class of bounded expansion has bounded strong odd chromatic number. In fact, we proved a more general statement, where the zero-or-odd condition on the number of times a color is featured is applied to balls of a fixed radius $d$, instead of distance-$1$ neighborhoods. The zero-or-odd condition appeared essentially only in the proof of \cref{thm:main-set-system}. It is not hard to see that the same proof method goes through with minimal modifications if this condition was be replaced by the following more general one: in every ball of radius $d$, every color is featured either zero times, or $1 \bmod m$ times, where $m\in \N$ is a fixed modulus. We leave the details to the reader.

The key ingredient in our proof is \cref{thm:main-set-system}, which states that bounding the three parameters --- the semi-ladder index, the 2VC dimension, and the maximum subchromatic number among induced subsystems --- suffices to derive a bound on the strong odd chromatic number of a set system. This result may serve as a motivation to seek a characterization of hereditary classes of set systems that have bounded strong odd chromatic number. It is easy to see that the boundedness of the ladder index, of the comatching index, and of the 2VC dimension, are all necessary conditions for such a characterization, because the families of induced obstructions for those parameters have unbounded strong odd chromatic number. However, it is not clear to whether bounding the subchromatic number is necessary as well, and to what~extent.

\bibliographystyle{plain}
\bibliography{references}

\end{document}